\newcommand{\citep}[1]{\cite{#1}}
\newtheoremstyle{definition}
   {8pt}{5pt}{        }{0pt}{\scshape}{.}{5pt}{\thmname{#1}\thmnote{\;\textrm{[#3]}}}
\newtheoremstyle{theorems}
   {5pt}{3pt}{\itshape}{0pt}{\bfseries\scshape}{\textbf{.}}{5pt}{}
\newtheoremstyle{ptheorems}
   {5pt}{3pt}{\itshape}{0pt}{\bfseries\scshape}{\textbf{.}}{5pt}{\thmname{#1}}
\theoremstyle{ptheorems}
   \newtheorem*{pthm}{Theorem}
\theoremstyle{theorems}
   \newtheorem{thm}{Theorem}[section]
   \newtheorem{lemma}	[thm]{Lemma}
   \newtheorem{proposition}[thm]{Proposition}
   \newtheorem{corollary}	[thm]{Corollary}
\theoremstyle{remark}
\theoremstyle{definition}
\newcommand{\dfn}[1]{\emph{\textbf{#1}}\index{#1}}
\def\emb#1{{\ensuremath{#1}}}			
\def\ss#1{\mathscr{#1}}				
\def\dd#1{\mathbbm{#1}}
			\def\ZZ{\dd Z}			
	      \def\AA{\dd A}
\def\set #1{\emb{\left\lbrace \:	 #1 \:\right\rbrace }}
\def\setm #1 | #2{\set{#1 \; \mskip1mu\vrule\mskip1mu \; #2}}
\def\lst #1:#2{\emb{ {#1},\,\dotsc,\,{#2}}}	
\def\seq #1(#2:#3){\lst {#1}_{#2}:{#1}_{#3}}
\def\som #1(#2:#3){\emb{ {#1}_{#2}+\,\dotsc,\,+ {#1}_{#3} }}
\def\prd #1(#2:#3){\emb{ {#1}_{#2}\dotsc{#1}_{#3} }}		
\def\fnarg #1:#2->#3.{\emb{#1:\arr{#2}->{#3}}}
\newcommand{\cat}[1][C]{\emb{\ss{#1}}}		\newcommand{\der}{\emb{\cat[D]}}
\newcommand{\kk}    {{\mathsf{k}}}			\newcommand{\mkQ}{\emb{\mod{\kk Q}}}
\newcommand{\id}    {{\rm id}}					
\newcommand{\Hom}   {\operatorname{Hom}}    \newcommand{\End}   {\operatorname{End}}
    \newcommand{\add}   {\operatorname{add}}
	  \newcommand{\HH}   {\operatorname{H}}
\newcommand{\Ima}   {\operatorname{Im}}     
\renewcommand{\mod}	{\operatorname{mod}}	  \newcommand{\rk}    {\operatorname{rk}}
\newcommand{\Ext}   {\operatorname{Ext}}
\newcommand{\Succ}    {{\rm Succ}\,}
				 \renewcommand{\ge}		{\geqslant}
\newcommand{\torsion}[2]{\left(\mathscr #1,\mathscr #2 \right)}
\newcommand{\sset}{\subseteq}
\renewcommand{\setminus}{\mathbin{\fgebackslash}\,}
\begin{document}
\title{Split $t$-structures and torsion pairs in hereditary categories}

   \author[I. Assem]{Ibrahim Assem}
   \address{D\'epartement de math\'ematiques, Facult\'e des sciences, Universit\'e de Sherbrooke,
       Sherbrooke, Qu\'ebec J1K 2R1, Canada.}
{\email{ibrahim.assem@usherbrooke.ca}


   \author[M. J. Souto-Salorio]{Mar\'ia Jos\'e Souto-Salorio}
\address{Departamento de Computaci\'on, Facultade de Inform\'atica, Universidade da Coru\~{n}a,
Campus de A Coru\~{n}a, 15071 A Coru\~{n}a, Espa\~{n}a}
 {\email{maria.souto.salorio@udc.es}

   \author[S. Trepode]{Sonia Trepode}
    \address{Departamento de Matem\'atica, Facultad de Ciencias Exactas y Naturales, Funes 3350,
       Universidad Nacional de Mar del Plata, CONICET,  7600 Mar del Plata, Argentina.}
 {\email{strepode@mdp.edu.ar}

   \subjclass[2000]{Primary: 18E30, 18E40; Secondary: 16G70.}

   \keywords{Torsion pair, $t$-structures, hereditary categories, split, tilting}

   \begin{abstract}
      We give necessary and sufficient conditions for torsion pairs in a hereditary category to be in bijection with $t$-structures in the bounded derived category of that hereditary category. We prove that the existence of a split $t$-structure with nontrivial heart in a semiconnected Krull-Schmidt category implies that this category is equivalent to the derived category of a hereditary category. We construct a bijection between split torsion pairs in the module category of a tilted algebra having a complete slice in the preinjective component with corresponding $t$-structures. Finally, we classify split $t$-structures in the derived category of a hereditary algebra.
   \end{abstract}

   \maketitle

   \section*{Introduction}
   The notion of torsion pair, or torsion theory, in an abelian category was introduced by S. Dickson in the 1960's, see~\citep{D}.
   Modeled after properties of torsion and torsion-free abelian groups, it gives information on the morphisms in the category.
   The analogous concept in a triangulated category is that of $t$-structure, introduced by Be{\u\i}linson, Bernstein and Deligne in~\citep{BBD}.

   The objective of the present paper is to compare torsion pairs in a hereditary category $\cat[H]$ and $t$-structures in the bounded derived category $\der^b(\cat[H])$ with special attention to those which are split.
   Let $\kk$ be an algebraically closed field.
   Following~\citep{HRS1}, we say that a connected abelian $\kk$-category $\cat[H]$ is hereditary whenever the bifunctor $\Ext^2_{\cat[H]}$ vanishes and the category has finite dimensional $\Hom$ and $\Ext^1$-spaces.

   Our starting point is an observation in~\citep{HRS1} saying that a torsion pair in $\cat[H]$ lifts to a $t$-structure in $\der^b(\cat[H])$.
   It is easy to see that the reverse procedure is obtained by taking the trace of the $t$-structure on $\cat[H]$.
   We deduce a bijective correspondence between torsion pairs in $\cat[H]$ and $t$-structures $(\cat[U],\cat[V])$ such that $\cat[H][1]\subseteq \cat[U]$ and
   $\cat[H]\subseteq \cat[V]$. Here, $[\,\cdot\,]$ denotes the shift of the derived category $\der^b(\cat[H])$.

   We then specialise our study to the split torsion pairs, namely those for which every indecomposable object is either torsion or torsion-free.
   We wish to study when they lift to split $t$-structures, that is, to $t$-structures $(\cat[U],\cat[V])$ for which every indecomposable object belongs either to $\cat[U]$ or to $\cat[V][-1]$.
   Our first result says that the mere existence of a split $t$-structure with nontrivial heart in a semiconnected Krull-Schmidt $\kk$-category implies that this category is equivalent to the derived category of a hereditary category.
   This generalises~\citep{BR}(4.2).
   We next look at tilted algebras.
   Let $H$ be an hereditary algebra.
   We recall that an algebra $A$ is called tilted of type $H$ if there exists a tilting $H$-module $T$ such that $A=\End T$.
   Tilted algebras are characterised by the existence of complete slices in their Auslander-Reiten quivers, see~\citep{ASS}.
   Denoting by $\cat_1$ the transjective component of the Auslander-Reiten quiver of $\der^b(\mod H)$ obtained by gluing the preinjective component of $H$ with the first shift of the postprojective component, we prove the following theorem.

   \begin{pthm}
      Let $A$ be a representation-infinite tilted algebra of type $H$ having a complete slice in the preinjective component. Then there exist bijective correspondences between:
      \begin{itemize}
         \item [(a)] Split torsion pairs $(\cat[T],\cat[F])$ in $\mod A$ with all preinjectives in $\cat[T]$ and all postprojectives in $\cat[F]$.
             
         \item [(b)] Split torsion  pairs $(\cat[T]',\cat[F]')$ in $\mod H$ with all preinjectives in $\cat[T]'$ and all postprojectives in $\cat[F]'$.
             
         \item [(c)]Split $t$-structures $(\cat[U],\cat[U]^\perp[1])$ in $\der^b(\mod H)$ with $\cat_1$ lying in the heart.
      \end{itemize}
   \end{pthm}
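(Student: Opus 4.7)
The plan is to prove (b)$\Leftrightarrow$(c) by a direct application of the lifting observation recalled in the introduction, and (a)$\Leftrightarrow$(c) by transporting along the triangle equivalence $F = R\Hom_H(T,-) : \der^b(\mod H) \to \der^b(\mod A)$ induced by the defining tilting module $T$; the bijection (a)$\Leftrightarrow$(b) then follows by composition.

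For (b)$\Leftrightarrow$(c), I would invoke the bijection recalled in the introduction between torsion pairs $(\cat[T]',\cat[F]')$ in $\mod H$ and $t$-structures $(\cat[U],\cat[V])$ on $\der^b(\mod H)$ satisfying $\mod H[1]\subseteq\cat[U]$ and $\mod H\subseteq\cat[V]$. Under this lifting the heart is extension-generated by $\cat[T]'$ together with $\cat[F]'[1]$. Two compatibilities then need verification. First, since $H$ is hereditary every indecomposable of $\der^b(\mod H)$ is a shift of an indecomposable $H$-module, so the split condition on the $t$-structure reduces to the split condition on the torsion pair. Second, $\cat_1$ consists exactly of the preinjective $H$-modules together with the postprojective $H$-modules shifted by $[1]$, so $\cat_1\subseteq\cat[U]\cap\cat[V]$ precisely when the preinjectives lie in $\cat[T]'$ and the postprojectives lie in $\cat[F]'$.

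For (a)$\Leftrightarrow$(c), I would apply the same HRS lifting on the $A$-side: split torsion pairs $(\cat[T],\cat[F])$ in $\mod A$ with preinjectives in $\cat[T]$ and postprojectives in $\cat[F]$ correspond bijectively to split $t$-structures on $\der^b(\mod A)$ whose heart contains the transjective component made of the preinjective component of $A$ together with the postprojective component of $A$ shifted by $[1]$. Transporting via $F^{-1}$ yields split $t$-structures on $\der^b(\mod H)$, and the equivalence (a)$\Leftrightarrow$(c) reduces to the geometric claim that $F$ sends $\cat_1\subset\der^b(\mod H)$ to this analogous transjective component of $\der^b(\mod A)$. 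I would establish this by analysing $F$ on the tilting torsion pair $(\cat[T](T),\cat[F](T))$ in $\mod H$: since the complete slice is preinjective, the tail of the preinjective component of $\mod H$ (from $\tau T$ onward) lies in $\cat[T](T)$ and is carried by $F$ onto the preinjective component of $\mod A$, while the postprojective component of $\mod H$ lies in $\cat[F](T)$ and, after the $[1]$-shift built into the HRS heart, is sent onto the postprojective component of $\mod A$ shifted by $[1]$. Since $F$ preserves connected components of the Auslander-Reiten quiver, this pins down the image of $\cat_1$.

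The main obstacle is this final matching of transjective components, in particular the control of the finitely many indecomposables near the slice that get repositioned under $F$: the preinjective $H$-modules lying ``before'' $\tau T$ that end up in $\cat[F](T)[1]$, and symmetrically the first few postprojective $A$-modules coming from $\cat[Y](T)$. Because the hypotheses in (a) are \emph{tail conditions} along the $\tau$-orbits comprising each transjective component, these finitely many boundary discrepancies are harmless once precisely identified. Handling them uses both the representation-infiniteness of $A$ (so that each component really is an infinite $\tau$-orbit on which the tail condition has content) and the preinjective-slice hypothesis; once this identification is in hand, the three-way bijection falls out from composing the two halves.
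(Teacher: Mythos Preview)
Your argument for (b)$\Leftrightarrow$(c) is exactly the paper's: it is the restriction of the lift/trace bijection (Proposition~\ref{p:2.4}) recorded as Theorem~\ref{t:3.3}, together with the observation that splitness is preserved because every indecomposable of $\der^b(\mod H)$ is a shift of an $H$-module.

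Your route for (a)$\Leftrightarrow$(c), however, diverges from the paper and contains a real gap. The paper does \emph{not} pass through $t$-structures on $\der^b(\mod A)$; it establishes (a)$\Leftrightarrow$(b) directly at the module level (Proposition~\ref{p:5.2}) by building mutually inverse maps $\zeta:\cat[T]\mapsto\Ima(\cat[T]\otimes_A T)$ and $\chi:\cat[T]'\mapsto{}^{\perp}\Hom_H(T,\cat[F]')$ and checking, using the split hypothesis in an essential way, that these preserve the stated conditions and are inverse bijections. Your plan instead invokes ``the same HRS lifting on the $A$-side'' to obtain a bijection between split torsion pairs in $\mod A$ and split $t$-structures on $\der^b(\mod A)$. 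But the lift/trace bijection of Proposition~\ref{p:2.4} is proved only for \emph{hereditary} $\cat[H]$: its proof uses the decomposition $\der^b(\cat[H])=\bigvee_j\cat[H][j]$, and the matching of split conditions uses that every indecomposable complex is a stalk. Neither holds for $\mod A$ when $A$ is tilted. Indeed, the paper itself notes (in the proof of Proposition~\ref{p:5.2}) that $(\cat[T](T),\cat[F](T))$ is usually not split in $\mod H$; any indecomposable $H$-module $M$ with $M_{\cat[T](T)}\ne 0\ne M_{\cat[F](T)}$ yields, under the derived equivalence, an indecomposable object of $\der^b(\mod A)$ that is not a shift of an $A$-module. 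For such objects there is no reason, from a split torsion pair in $\mod A$ alone, that they land in $\cat[U]$ or in $\cat[U]^\perp$. So the step ``split torsion pairs in $\mod A$ $\leftrightarrow$ split $t$-structures on $\der^b(\mod A)$'' is not justified by the tools at hand, and the subsequent transport along $F$ never gets off the ground. The ``finitely many boundary discrepancies'' you flag are a separate issue from this structural one; even granting a general heart-tilting bijection for non-hereditary hearts (which would require an external reference), the splitness transfer still needs an argument you have not supplied.
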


   While the bijection between (b) and (c) is constructed categorically using the description of the derived category and does not require the splitting hypothesis, the bijection between (a) and (b) requires the use of the tilting functors and uses essentially that the torsion pairs are split.

   Finally, we complete our results by deriving a classification of the split $t$-structures in the derived category of a hereditary algebra.

   \section{Preliminaries}
   \subsection{Notation}
   Throughout this paper, $\kk$ denotes a fixed algebraically closed field.
   All our algebras are finite dimensional $\kk$-algebras and our modules are finitely generated right modules.
   The module category of an algebra $A$ is denoted by $\mod A$.
   All our categories are additive Krull-Schmidt $\kk$-categories.
   If $\cat$ is a category and $\cat[D]$ a full subcategory of $\cat$, we write $X\in\der$ to express that $X$ is an object in $\der$.
   The \dfn{right} and \dfn{left orthogonals} of $\der$ are the full subcategories of $\cat$ defined respectively by their object classes as:
   \begin{align*}
   \der^\perp   &= \set{Y\in\cat \mid \Hom_{\cat }(X,Y)=0 \text{ for all }X\in\der}\text{, and}\\
   {^\perp}\der &= \set{Y\in\cat \mid \Hom_{\cat }(Y,X)=0 \text{ for all }X\in\der}.
   \end{align*}

   Given two full subcategories $\der_1$, $\der_2$ of $\cat$ such that $\Hom_{\cat }(X_2,X_1)=0$ for all $X_1\in\der_1$ and $X_2\in\der_2$, then we denote by $\der_1\vee\der_2$ the full subcategory of $\cat$ generated by all objects of $\der_1$ and $\der_2$. For all basic notions of representation theory, we refer the reader to~\citep{ARS,ASS}.

   \subsection{Torsion pairs in hereditary categories}
   A connected abelian $\kk$-category $\cat[H]$ is \dfn{hereditary} if, for all $X$, $Y\in\cat[H]$, we have $\Ext^2_{\cat[H]}(X,Y)=0$ while $\Hom_{\cat[H]}(X,Y)$ and $\Ext^1_{\cat[H]}(X,Y)$ are finite dimensional $\kk$-vector spaces.

   An object $T$ in a hereditary category $\cat[H]$ is \dfn{tilting} if
   $\Ext^1_{\cat[H]}(T,T)=0$ and if \linebreak[4]$\Hom_{\cat[H]}(T,X)=0=\Ext^1_{\cat[H]}(T,X)$ imply $X=0$.

   It is shown in~\citep{H2} that, if $\cat[H]$ is a hereditary category with tilting object, then $\cat[H]$ is derived-equivalent to $\mod H$ for some hereditary algebra $H$, or to $\mod C$, for some canonical algebra $C$, in the sense of~\citep{R1}.
   In each of these two cases, the bounded derived category $\der^b(\cat[H])$ is a triangulated category with Serre duality.

   A \dfn{torsion pair} $(\cat[T],\cat[F])$ in $\cat[H]$ is a pair $(\cat[T],\cat[F])$ of full subcategories such that:
   \begin{itemize}
      \item [(a)]For all $X\in\cat[T]$, $Y\in\cat[F]$, we have $\Hom_{\cat[H]}(X,Y)=0$.
      
      \item [(b)]For any $Y\in\cat[H]$, there exists a short exact sequence (the \dfn{canonical sequence} of $Y$) of the form $0 \longrightarrow X \longrightarrow Y \longrightarrow Z \longrightarrow 0 $ with $X\in\cat[T]$ and $Z\in\cat[F]$.
   \end{itemize}

   Objects in $\cat[T]$ are called \dfn{torsion}, while those in $\cat[F]$ are called \dfn{torsion-free}.

   Equivalently, a pair $(\cat[T],\cat[F])$ of full subcategories is a torsion pair if and only if $\cat[T]={^\perp}\cat[F]$, or if and only if $\cat[F]=\cat[T]^\perp$.
   For instance, any tilting object $T$ in $\cat[H]$ induces a torsion pair $(\cat[T](T),\cat[F](T))$ where
   $\cat[T](T)=\set{X\in\cat[H]\mid\Ext^1_{\cat[H]}(T,X)=0}$ and
   $\cat[F](T)=\set{Y\in\cat[H]\mid\Hom_{\cat[H]}(T,Y)=0}$, see~\citep{HRS1,HR}.

   A torsion pair $(\cat[T],\cat[F])$ is \dfn{split} if every indecomposable object in $\cat[H]$ belongs either to $\cat[T]$ or to $\cat[F]$.

   \subsection{$t$-structures in triangulated categories}

   Let $\cat$ be a triangulated category with shift $[\,\cdot\,]$.
   All triangles considered will be distinguished triangles.
   A full subcategory $\cat[U]$ of $\cat$, closed under direct summands, is \dfn{suspended} if it is closed under positive shifts and extensions, that is,
   \begin{itemize}
      \item If $X\in\cat[U]$, then $X[1]\in\cat[U]$.
      
      \item If $X \longrightarrow Y \longrightarrow Z \longrightarrow X[1]$ is a triangle in $\cat$, with $X,Z\in\cat[U]$, then
      $Y\in\cat[U]$.
   \end{itemize}

   Dually, one defines \dfn{cosuspended subcategories}.

   A \dfn{$t$-structure}, see~\citep{BBD}(1.3.1), is a pair $(\cat[U],\cat[V])$ of full subcategories of $\cat$ such that
   \begin{itemize}
      \item If $X\in\cat[U]$ and $Y\in\cat[V][-1]$, then $\Hom_{\cat}(X,Y)=0$.
      
      \item $\cat[U]\subseteq\cat[U][-1]$ and $\cat[V]\supseteq\cat[V][-1]$.
      
      \item For any $Y\in\cat$, there exists a triangle $X \longrightarrow Y \longrightarrow Z \longrightarrow X[1]$ in $\cat$ with $X\in\cat[U]$, $Z\in\cat[V][-1]$.
   \end{itemize}

   The $t$-structure $\torsion UV$ is \dfn{split} if every indecomposable
   object in $\cat$ belongs either to $\cat[U]$ or to $\cat[V][-1]$.

   A suspended subcategory $\cat[U]$ of $\cat$ is an \dfn{aisle} if it is contravariantly finite in $\cat$.
   It is proved in~\citep{KV}(1.1)(1.3) that the following conditions are equivalent for a suspended subcategory $\cat[U]$ of $\cat$:
   \begin{itemize}
      \item $\cat[U]$ is an aisle.
      
      \item $(\cat[U],\cat[U]^\perp[1])$ is a $t$-structure.
      
      \item For any $Y\in\cat$, there exists a triangle $X \longrightarrow Y \longrightarrow Z \longrightarrow X[1]$ in $\cat$ with $X\in\cat[U]$, $Z\in\cat[U]^\perp$.
   \end{itemize}

   The dual notion is that of \dfn{coaisle}, for which the dual statement holds.

   The \dfn{heart} of the $t$-structure $(\cat[U],\cat[U]^\perp[1])$ is the full subcategory $\cat[U]\cap\cat[U]^\perp[1]$, which is abelian, because of~\citep{BBD}(1.3.6).

   Given a full subcategory $\cat[U]$ of $\cat$ closed under extensions, an object $X\in\cat[U]$ is \dfn{Ext-projective} in $\cat[U]$ if $\Hom_{\cat}(X,Y[1])=0$ for all $Y\in\cat[U]$, see~\citep{AS}.
   If $\cat$ has Serre duality, then an indecomposable object $X\in\cat[U]$ is Ext-projective in $\cat[U]$ if and only if $\tau X\in\cat[U]^\perp$, see~\citep{AST}(1.5).
   The dual notion is that of \dfn{Ext-injective} in $\cat[U]$, for which the dual statement holds.

   \section{The lift and trace maps}

   Let $\cat[H]$ be a hereditary category.
   In this section, we compare torsion pairs in $\cat[H]$ and $t$-structures in $\der^b(\cat[H])$ by means of two maps.
   We start by recalling the following lemma~\citep{HRS1}(I.2.1).

   \begin{lemma}
      A torsion pair $(\cat[T],\cat[F])$ in an abelian category $\cat[A]$ induces a $t$-structure \linebreak[4]$(\cat[U]_{\cat[T]},\cat[U]^\perp_{\cat[T]}[1])$ in $\der^b(\cat[A])$ by:
      \begin{align*}
      \cat[U]_{\cat[T]} &=\set{X\in\der^b(\cat[A])\mid \HH^i(X)=0\text{~~for all~}i>0,~ \HH^0(X)\in\cat[T]}\text{, and}\\
      \cat[U]_{\cat[T]}^\perp &=\set{X\in\der^b(\cat[A])\mid \HH^i(X)=0\text{~~for all~}i<-1,~ \HH^{-1}(X)\in\cat[F]}.\qed
      \end{align*}
   \end{lemma}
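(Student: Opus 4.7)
The plan is to verify the three axioms of a $t$-structure directly for the pair $(\cat[U]_{\cat[T]},\cat[U]_{\cat[T]}^\perp[1])$; to ease notation, write $\cat[U] = \cat[U]_{\cat[T]}$ and $\cat[V] = \cat[U]^\perp[1]$, so that $\cat[V][-1] = \cat[U]^\perp$. The shift-closure axioms are straightforward cohomology calculations: if $X \in \cat[U]$, then $\HH^i(X[1]) = \HH^{i+1}(X)$ vanishes for $i \geq 0$, and in particular $\HH^0(X[1]) = 0 \in \cat[T]$, so $X[1] \in \cat[U]$; a dual cohomology check gives $\cat[V] \supseteq \cat[V][-1]$.

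For the existence of a decomposition triangle, I would combine the canonical truncation $\tau^{\leq 0} Y \to Y \to \tau^{\geq 1} Y \to \tau^{\leq 0}Y[1]$ of an object $Y \in \der^b(\cat[A])$ with the torsion pair decomposition of $\HH^0(Y)$. Let $0 \to t\HH^0(Y) \to \HH^0(Y) \to f\HH^0(Y) \to 0$ be the canonical sequence in $\cat[A]$, with $t\HH^0(Y) \in \cat[T]$ and $f\HH^0(Y) \in \cat[F]$, and define $X$ by completing the composite $\tau^{\leq 0} Y \to \HH^0(Y)[0] \to f\HH^0(Y)[0]$ to a triangle $X \to \tau^{\leq 0} Y \to f\HH^0(Y)[0] \to X[1]$. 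Reading off the long exact cohomology sequence gives $\HH^i(X) = \HH^i(Y)$ for $i<0$, $\HH^0(X) = t\HH^0(Y) \in \cat[T]$ and $\HH^i(X) = 0$ for $i>0$, so $X \in \cat[U]$. Applying the octahedral axiom to $X \to \tau^{\leq 0} Y \to Y$ yields the sought triangle $X \to Y \to Z \to X[1]$, with $Z$ fitting into $f\HH^0(Y)[0] \to Z \to \tau^{\geq 1}Y \to f\HH^0(Y)[1]$; a further cohomology computation gives $\HH^{-1}(Z) = 0 \in \cat[F]$ and $\HH^i(Z) = 0$ for $i<-1$, placing $Z$ in $\cat[U]^\perp$.

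The third axiom, the orthogonality $\Hom(X,Z) = 0$ for $X \in \cat[U]$ and $Z \in \cat[V][-1] = \cat[U]^\perp$, is the most delicate step. My approach is to reduce it to the torsion pair vanishing $\Hom_{\cat[A]}(\cat[T],\cat[F]) = 0$ by applying the truncation triangles on both sides: $\Hom(-,Z)$ to $\tau^{\leq -1} X \to X \to \HH^0(X)[0] \to \tau^{\leq -1}X[1]$, and $\Hom(X,-)$ to the analogous truncation $\HH^{-1}(Z)[1] \to Z \to \tau^{\geq 0}Z \to \HH^{-1}(Z)[2]$. All intermediate contributions either pair complexes whose cohomologies lie in non-overlapping ranges relative to the standard $t$-structure on $\der^b(\cat[A])$, and vanish for degree reasons, or collapse to the torsion pair vanishing applied to $\HH^0(X) \in \cat[T]$ and $\HH^{-1}(Z) \in \cat[F]$. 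The main obstacle is the careful bookkeeping of the various terms coming from these truncations; once each summand is identified and shown to vanish, the three axioms assemble into the claimed $t$-structure.
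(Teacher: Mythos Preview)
The paper does not prove this lemma at all: it is recalled from \cite{HRS1}(I.2.1) and the statement is closed with a QED symbol. Your construction of the decomposition triangle---standard truncation, then refine by the torsion sequence of $\HH^0(Y)$, then octahedral---is precisely the standard argument from that reference, and it is correct.

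Your orthogonality step, however, has a real gap, traceable to a shift discrepancy in the statement as printed. The displayed description of $\cat[U]_{\cat[T]}^\perp$ is in fact that of $\cat[U]_{\cat[T]}^\perp[1]$; compare with the formula $\cat[U]_{\cat[T]}^\perp = \bigl(\bigvee_{j<0}\cat[H][j]\bigr)\vee\cat[F]$ used later in the proof of Proposition~\ref{p:2.4}, which places the torsion-free part in cohomological degree~$0$, not~$-1$. If one takes the displayed description literally, the vanishing $\Hom(\cat[U],\cat[U]^\perp)=0$ is simply false, so your d\'evissage cannot close: for instance $\Hom\bigl(\tau^{\le -1}X,\HH^{-1}(Z)[1]\bigr)\cong\Hom_{\cat[A]}\bigl(\HH^{-1}(X),\HH^{-1}(Z)\bigr)$ survives and carries no torsion-pair constraint, and likewise $\Hom\bigl(\HH^0(X)[0],\tau^{\ge 0}Z\bigr)$ sees $\Hom_{\cat[A]}\bigl(\HH^0(X),\HH^0(Z)\bigr)$ with $\HH^0(Z)$ unconstrained. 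Neither of these fits your dichotomy of ``non-overlapping ranges'' versus ``torsion-pair vanishing''. With the intended description of $\cat[U]^\perp$ (namely $\HH^i(Z)=0$ for $i<0$ and $\HH^0(Z)\in\cat[F]$), your strategy does go through cleanly: $\Hom(\tau^{\le -1}X,Z)=0$ purely for degree reasons, and $\Hom(\HH^0(X)[0],Z)$ reduces, after one further truncation of $Z$, to $\Hom_{\cat[A]}\bigl(\HH^0(X),\HH^0(Z)\bigr)=0$.
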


   Thus, there exists a map 
   $\phi:(\cat[T],\cat[F])\longrightarrow (\cat[U]_{\cat[T]},\cat[U]_{\cat[T]}^\perp[1])$
   from the class of torsion pairs in $\cat[H]$ to the class of $t$-structures in $\der^b(\cat[A])$.
   The map $\phi$ is called the \dfn{lift map}.
   We now proceed to define a partial inverse map.

   \begin{lemma}\label{l:2.2}
      Let $\cat[U]$ be an aisle, and $\cat[V]$ a coaisle in $\der^b(\cat[H])$.
      \begin{itemize}
         \item If $\cat[H][1]\subseteq \cat[U]$, then $\cat[T]=\cat[U]\cap\cat[H]$ is a torsion class in $\cat[H]$.
         
         \item If $\cat[H][-1]\subseteq \cat[V]$, then $\cat[F]=\cat[V]\cap\cat[H]$ is a torsion-free class in $\cat[H]$.
      \end{itemize}
   \end{lemma}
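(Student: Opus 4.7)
The plan is to verify that $\cat[T]:=\cat[U]\cap\cat[H]$ is the torsion part of a torsion pair $(\cat[T],\cat[F])$ in $\cat[H]$, with $\cat[F]:=\cat[T]^\perp$ taken inside $\cat[H]$; that is, to check that $\cat[T]$ is closed under quotients and extensions in $\cat[H]$ and that every $Y\in\cat[H]$ fits into an exact sequence $0\to X\to Y\to Z\to 0$ with $X\in\cat[T]$ and $Z\in\cat[F]$. The second assertion of the lemma, about $\cat[F]=\cat[V]\cap\cat[H]$ under the hypothesis $\cat[H][-1]\subseteq\cat[V]$, is formally dual, so I would only write out the first.

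Closure under extensions is immediate from the triangulated structure: an exact sequence $0\to A\to B\to C\to 0$ in $\cat[H]$ lifts to a triangle $A\to B\to C\to A[1]$, and if $A,C\in\cat[T]\subseteq\cat[U]$ then the extension-closure of the suspended subcategory $\cat[U]$ forces $B\in\cat[U]\cap\cat[H]=\cat[T]$. Closure under quotients is where the hypothesis $\cat[H][1]\subseteq\cat[U]$ first enters: if $X\in\cat[T]$ surjects onto $Y$ with kernel $K$, the rotated triangle $X\to Y\to K[1]\to X[1]$ has $X\in\cat[U]$ and $K[1]\in\cat[H][1]\subseteq\cat[U]$, so the extension-closure of $\cat[U]$ again gives $Y\in\cat[U]$, whence $Y\in\cat[T]$.

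For the canonical sequence, I apply the aisle to $Y\in\cat[H]$ to obtain a triangle $X'\to Y\to Z'\to X'[1]$ with $X'\in\cat[U]$ and $Z'\in\cat[U]^\perp$. Since $\cat[H]$ is hereditary, every object of $\der^b(\cat[H])$ splits as the direct sum of its shifted cohomologies, so writing $X'\cong\bigoplus_i H^i(X')[-i]$ and using that $\cat[U]$ is closed under summands shows that the component $H^0(X')$ lies in $\cat[U]\cap\cat[H]=\cat[T]$; I set $X:=H^0(X')$. Combining the hypothesis $\cat[H][n]\subseteq\cat[U]$ for all $n\geq 1$ with $Z'\in\cat[U]^\perp$, and unpacking $\Hom_{\der^b(\cat[H])}(H[n],Z')$ for $H\in\cat[H]$ via the hereditary splitting of $Z'$, a short induction on $n$ yields $H^i(Z')=0$ for every $i\leq -1$. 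Substituting this vanishing into the long exact cohomology sequence of the triangle produces the four-term exact sequence $0\to X\to Y\to H^0(Z')\to H^1(X')\to 0$ in $\cat[H]$; taking $Z$ to be the image of $Y\to H^0(Z')$ yields the desired short exact sequence $0\to X\to Y\to Z\to 0$, and the embedding $Z\hookrightarrow Z'\in\cat[U]^\perp$ together with $\cat[T]\subseteq\cat[U]$ gives $\Hom_{\cat[H]}(\cat[T],Z)=0$, so $Z\in\cat[F]$. The main obstacle is exactly this cohomological vanishing on $Z'$: it is where both the heredity of $\cat[H]$ and the hypothesis $\cat[H][1]\subseteq\cat[U]$ are genuinely used, with all remaining steps being routine manipulation in the triangulated category.
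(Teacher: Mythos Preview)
Your proof is correct, but it follows a different strategy from the paper's. The paper argues directly that $\cat[T]={}^\perp(\cat[T]^\perp)$: given $Y\in{}^\perp(\cat[T]^\perp)\subseteq\cat[H]$, it takes the aisle truncation triangle $X\to Y\xrightarrow{g} Z\to X[1]$ and shows that $Z$ actually lies in $\cat[H]$ (using the hereditary splitting together with the hypothesis $\cat[H][1]\subseteq\cat[U]$), whence $Z\in\cat[U]^\perp\cap\cat[H]\subseteq\cat[T]^\perp$; then $g=0$ and $Y$ is a summand of $X\in\cat[U]$, so $Y\in\cat[T]$. You instead verify the torsion-pair axioms directly: closure of $\cat[T]$ under extensions and quotients, and an explicit construction of the canonical sequence via the long exact cohomology sequence of that same truncation triangle. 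Both routes rest on the same cohomological vanishing $H^i(Z')=0$ for $i\le -1$; the paper pushes further to force $Z'\in\cat[H]$ outright, while you stop at the four-term exact sequence in degree zero and extract the image. The paper's argument is shorter and more conceptual; yours is more explicit and makes the role of each hypothesis transparent. One minor remark: once you have the canonical sequence with $X\in\cat[T]$ and $Z\in\cat[T]^\perp$, the closure properties you prove first are already implied, so that part of your argument is redundant (though not wrong).
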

   \begin{proof}
      We only prove (a), because the proof of (b) is dual.
      In order to prove that $\cat[T]=\cat[U]\cap\cat[H]$ is a torsion class, it suffices to prove that $\cat[T]={^\perp}(\cat[T]^\perp)$.
      Trivially, one has $\cat[T]\subseteq {^\perp}(\cat[T]^\perp)$.
      Conversely, assume $Y\in{^\perp}(\cat[T]^\perp)$ that is, $\Hom_{\cat[H]}(Y,-)\big|_{\cat[T]^\perp}=0$.
      Because $\cat[U]$ is an aisle, there exists a triangle
      $X \longrightarrow Y \longrightarrow Z \longrightarrow X[1]$
      with $X\in\cat[U]$, $Z\in\cat[U]^\perp$.
      Let $Z'$ be an indecomposable summand of $Z$.
      Because $Y\in\cat[H]$, then $Z'$ is concentrated in degree $0$ or $1$.
      Assume $Z'=M[1]$ for some $M\in\cat[H]$.
      The hypothesis yields $Z'\in\cat[U]$.
      Hence $Z'\in\cat[U]\cap\cat[U]^\perp=0$, a contradiction.
      Therefore, all indecomposable summands of $Z$ are concentrated in degree $0$, that is $Z\in\cat[H]$.
      Our hypothesis on $Y$ yields $g=0$, so $Y$ is a direct summand of $X$.
      Thus $Y\in\cat[U]$.
      Because $Y\in\cat[H]$, then $Y\in\cat[T]$. This completes the proof.
   \end{proof}

   \begin{corollary}\label{c:2.3}
      Let $(\cat[U],\cat[U]^\perp[1])$ be a $t$-structure in $\der^b(\cat[H])$ such that $\cat[H][1]\subseteq \cat[U]$ and $\cat[H][-1]\subseteq \cat[U]^\perp$.
      Then $(\cat[U]\cap\cat[H], \cat[U]^\perp \cap\cat[H])$ is a torsion pair in $\cat[H]$.
   \end{corollary}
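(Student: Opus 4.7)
My plan is to verify directly the two defining conditions of a torsion pair for
\[
(\cat[T],\cat[F]) := (\cat[U]\cap\cat[H],\;\cat[U]^\perp\cap\cat[H]).
\]
Condition~(a) is immediate: for $X\in\cat[T]$ and $Y\in\cat[F]$, one has $\Hom_{\cat[H]}(X,Y)=\Hom_{\der^b(\cat[H])}(X,Y)=0$ by the orthogonality in the $t$-structure. For condition~(b), I fix $Y\in\cat[H]$ and consider the truncation triangle $X' \To Y \To Z' \To X'[1]$ provided by the $t$-structure, with $X'\in\cat[U]$ and $Z'\in\cat[U]^\perp$. It will suffice to show that $X'$ and $Z'$ both lie in $\cat[H]$: then $X'\in\cat[T]$, $Z'\in\cat[F]$, and the triangle corresponds to a short exact sequence $0\to X'\to Y\to Z'\to 0$ in $\cat[H]$, which is the required canonical sequence for $Y$.

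The key tool will be that, since $\cat[H]$ is hereditary, every object $W\in\der^b(\cat[H])$ decomposes canonically as $W\cong\bigoplus_i \HH^i(W)[-i]$, and each summand inherits membership in any summand-closed subcategory of $\der^b(\cat[H])$ containing $W$. I first note that $\cat[U]^\perp$ is closed under negative shifts, because $\cat[U]$ is closed under positive shifts; iterating, the hypothesis $\cat[H][-1]\subseteq\cat[U]^\perp$ gives $\cat[H][-i]\subseteq\cat[U]^\perp$ for every $i\ge 1$. Symmetrically, $\cat[H][1]\subseteq\cat[U]$ together with closure of $\cat[U]$ under positive shifts gives $\cat[H][k]\subseteq\cat[U]$ for every $k\ge 1$. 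Consequently, for $i\ge 1$ the summand $\HH^i(X')[-i]$ of $X'$ lies simultaneously in $\cat[U]$ and in $\cat[U]^\perp$, hence vanishes, so $\HH^i(X')=0$; dually, $\HH^i(Z')=0$ for every $i\le -1$.

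To eliminate the remaining cohomologies I will invoke the long exact cohomology sequence of the triangle. Since $\HH^i(Y)=0$ for $i\ne 0$, its relevant segment is
\[
\HH^{i-1}(Z')\To\HH^i(X')\To\HH^i(Y)\To\HH^i(Z')\To\HH^{i+1}(X').
\]
For $i\le -1$, both $\HH^{i-1}(Z')$ (since $i-1\le -2$) and $\HH^i(Y)$ vanish, forcing $\HH^i(X')=0$; for $i\ge 1$, both $\HH^i(Y)$ and $\HH^{i+1}(X')$ vanish, forcing $\HH^i(Z')=0$. Hence $X'$ and $Z'$ are concentrated in degree~$0$ and the claim is established. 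I expect the principal obstacle to be exactly this cohomological collapse of $X'$ and $Z'$ to objects of $\cat[H]$; once it is secured, the torsion pair axioms follow with no further work.
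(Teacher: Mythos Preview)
Your proof is correct. The core mechanism is the same as the paper's: take the truncation triangle of $Y\in\cat[H]$ and use the hereditary splitting of objects in $\der^b(\cat[H])$ together with the hypotheses $\cat[H][1]\subseteq\cat[U]$, $\cat[H][-1]\subseteq\cat[U]^\perp$ to force $X'$ and $Z'$ into $\cat[H]$.

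The packaging, however, differs. The paper first invokes Lemma~\ref{l:2.2} to know separately that $\cat[U]\cap\cat[H]$ is a torsion class and $\cat[U]^\perp\cap\cat[H]$ a torsion-free class, and then proves the compatibility $(\cat[U]\cap\cat[H])^\perp=\cat[U]^\perp\cap\cat[H]$; in that last step it takes $Y$ already in $(\cat[U]\cap\cat[H])^\perp$ and uses this to kill the map $X\to Y$. You instead verify the torsion-pair axioms directly for an arbitrary $Y\in\cat[H]$, so you never need Lemma~\ref{l:2.2} at all, and you make the cohomological collapse explicit by combining the summand argument (killing $\HH^i(X')$ for $i\ge 1$ and $\HH^i(Z')$ for $i\le -1$) with the long exact cohomology sequence (killing the remaining degrees). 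This is slightly more self-contained and transparent than the paper's argument, at the cost of not isolating the one-sided statement of Lemma~\ref{l:2.2} for later reuse.
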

   \begin{proof}
      Because of lemma\ref{l:2.2}, $\cat[U]\cap\cat[H]$ is a torsion class, and $\cat[U]^\perp\cap\cat[H]$ is a torsion-free class.
      There remains to show that $(\cat[U]\cap\cat[H])^\perp = \cat[U]^\perp\cap\cat[H]$.
      Clearly, $\cat[U]^\perp\cap\cat[H]\subseteq (\cat[U]\cap\cat[H])^\perp$.
      Conversely, let $Y\in (\cat[U]\cap\cat[H])^\perp$.
      There exists a triangle in $\der^b(\cat[H])$
      $X \longrightarrow Y \longrightarrow Z \longrightarrow X[1]$
      with $X\in\cat[U]$, $Z\in\cat[U]^\perp$.
      Because $Y\in\cat[H]$, every indecomposable summand $X'$ of $X$ is concentrated in degree $0$ or $-1$.
      However, $\cat[H][-1]\subseteq\cat[U]^\perp$ thus, if $X'$ is concentrated in degree $-1$, then $X'\in\cat[U]\cap\cat[U]^\perp=0$, a contradiction.
      Hence, $X$ is concentrated in degree $0$, that is, $X\in\cat[H]$.
      So $X\in\cat[U]\cap\cat[H]$.
      Similarly, $Z\in\cat[U]^\perp\cap\cat[H]$.
      But then $Y\in (\cat[U]\cap\cat[H])^\perp$ implies $f=0$, so that $Y\in\cat[U]^\perp\cap\cat[H]$.
   \end{proof}

   Thus, we have a map 
   $\psi:(\cat[U],\cat[U]^\perp[1])\longrightarrow (\cat[U]\cap\cat[H], \cat[U]^\perp \cap\cat[H])$
   
   from the class of $t$-structures in $\der^b(\cat[H])$ with $\cat[H][1]\subseteq \cat[U]$, $\cat[H][-1]\subseteq \cat[U]^\perp$ to the class of torsion pairs in $\cat[H]$.
   The map $\psi$ is called the \dfn{trace map}.
   We now prove that the trace map and lift map are inverse to each other.

   \begin{proposition}\label{p:2.4}
      Let $\cat[H]$ be an hereditary category, then the lift and the trace maps are inverse bijections between the class of all torsion pairs $(\cat[T],\cat[F])$ in $\cat[H]$ and the class of all $t$-structures $(\cat[U],\cat[U]^\perp[1])$ in $\der^b(\cat[H])$ such that $\cat[H][1]\subseteq \cat[U]$ and $\cat[H][-1]\subseteq \cat[U]^\perp$.
   \end{proposition}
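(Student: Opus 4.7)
The plan is to check three things: (i) the lift map $\phi$ takes values in the class of $t$-structures satisfying $\cat[H][1]\subseteq\cat[U]$ and $\cat[H][-1]\subseteq\cat[U]^\perp$, (ii) $\psi\circ\phi=\id$, and (iii) $\phi\circ\psi=\id$. Well-definedness of $\psi$ is already given by Corollary~\ref{c:2.3}. For (i), if $M\in\cat[H]$ then $M[1]$ has cohomology concentrated in degree $-1$, so $\HH^i(M[1])=0$ for $i\ge 0$ and $\HH^0(M[1])=0\in\cat[T]$ trivially, placing $M[1]\in\cat[U]_{\cat[T]}$; dually $M[-1]\in\cat[U]_{\cat[T]}^\perp$.

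Step (ii) is a short cohomological computation: for $M\in\cat[H]$ viewed as a complex in degree $0$, one has $M\in\cat[U]_{\cat[T]}\cap\cat[H]$ precisely when $\HH^0(M)=M\in\cat[T]$, giving $\cat[U]_{\cat[T]}\cap\cat[H]=\cat[T]$, and dually $\cat[U]_{\cat[T]}^\perp\cap\cat[H]=\cat[F]$.

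Step (iii) carries the content. Given $(\cat[U],\cat[U]^\perp[1])$ with the stated hypotheses, set $\cat[T]=\cat[U]\cap\cat[H]$ and $\cat[F]=\cat[U]^\perp\cap\cat[H]$; by Corollary~\ref{c:2.3} this is a torsion pair. It suffices to prove $\cat[U]=\cat[U]_{\cat[T]}$, for then $\cat[U]^\perp=\cat[U]_{\cat[T]}^\perp$ is automatic. The crucial tool is that heredity of $\cat[H]$ yields a splitting $X\cong\bigoplus_{i\in\ZZ}\HH^i(X)[-i]$ in $\der^b(\cat[H])$ for every object $X$. Because $\cat[U]$ is closed under extensions and direct summands, $X\in\cat[U]$ iff each summand $\HH^i(X)[-i]\in\cat[U]$. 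Iterating $\cat[H][1]\subseteq\cat[U]$ with the shift-closure $\cat[U][1]\subseteq\cat[U]$ gives $\cat[H][k]\subseteq\cat[U]$ for all $k\ge 1$, so the summands with $i<0$ belong to $\cat[U]$ for free. Iterating $\cat[H][-1]\subseteq\cat[U]^\perp$ with the negative-shift closure of $\cat[U]^\perp$ (which follows from $\cat[V]\supseteq\cat[V][-1]$) yields $\cat[H][-k]\subseteq\cat[U]^\perp$ for all $k\ge 1$; hence for $i\ge 1$ the summand $\HH^i(X)[-i]$ lies in $\cat[U]^\perp$, and if it also lay in $\cat[U]$ it would be in $\cat[U]\cap\cat[U]^\perp=0$, forcing $\HH^i(X)=0$. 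The remaining $i=0$ summand is in $\cat[U]$ exactly when $\HH^0(X)\in\cat[T]$, which matches the description of $\cat[U]_{\cat[T]}$.

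The principal obstacle is the pointwise reduction in step (iii); it depends essentially on the splitting of complexes furnished by heredity of $\cat[H]$, without which membership in an aisle could not be tested cohomology-by-cohomology. Once that splitting is in place, the rest of the argument is routine manipulation of the shift-closure properties of aisles and coaisles combined with the two hypotheses on $\cat[H][\pm 1]$.
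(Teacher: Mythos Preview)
Your proof is correct and follows essentially the same strategy as the paper: both rely on the key structural fact (from heredity of $\cat[H]$) that every object of $\der^b(\cat[H])$ decomposes as a direct sum of its shifted cohomologies, which the paper phrases as the decomposition $\der^b(\cat[H])=\bigvee_{j\in\ZZ}\cat[H][j]$. The only difference is cosmetic, in step (iii): after establishing $\cat[U]_{\cat[U]\cap\cat[H]}\subseteq\cat[U]$ and $\cat[U]_{\cat[U]\cap\cat[H]}^\perp\subseteq\cat[U]^\perp$, the paper closes with the orthogonality trick $\cat[U]={^\perp}(\cat[U]^\perp)\subseteq{^\perp}(\cat[U]_{\cat[U]\cap\cat[H]}^\perp)=\cat[U]_{\cat[U]\cap\cat[H]}$, whereas you verify $\cat[U]=\cat[U]_{\cat[T]}$ directly by arguing that for $X\in\cat[U]$ the summands $\HH^i(X)[-i]$ with $i>0$ lie in $\cat[U]\cap\cat[U]^\perp=0$. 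Your route is slightly more explicit; the paper's is slightly slicker; the substance is identical.
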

   \begin{proof}
      We first show that the image of $\phi$ lies in the class of $t$-structures satisfying the stated conditions.
      Indeed, let $(\cat[T],\cat[F])$ be a torsion pair in $\cat[H]$, and $\phi(\cat[T],\cat[F])=(\cat[U]_{\cat[T]},\cat[U]_{\cat[T]}^\perp[1])$.
      Using the description of the category $\der^b(\cat[H])$, see~\citep{H1}, this may be expressed as follows
      \begin{align*}
      \cat[U]_{\cat[T]}&=\cat[T]\vee \left(\bigvee_{j>0}\cat[H][j]\right)\text{, and}\\
      \cat[U]_{\cat[T]}^\perp&=\left(\bigvee_{j<0}\cat[H][j]\right)\vee\cat[F]
      \end{align*}
      where $\cat[T]$ and $\cat[F]$ are considered as embedded in $\cat[H][0]\subseteq \der^b(\cat[H])$.
      In particular, $\cat[H][1]\subseteq \cat[U]_{\cat[T]}$ and $\cat[H][-1]\subseteq\cat[U]_{\cat[T]}^\perp$.

      We now prove that $\phi$ and $\psi$ are inverse bijections.
      If $(\cat[T],\cat[F])$ is a torsion pair in $\cat[H]$, then it follows immediately from the definitions that $\psi\phi(\cat[T],\cat[F])=(\cat[T],\cat[F])$.
      Conversely, let $(\cat[U],\cat[U]^\perp[1])$ be a $t$-structure in $\der^b(\cat[H])$ such that $\cat[H][1]\subseteq\cat[U]$ and $\cat[H][-1]\subseteq\cat[U]^\perp$, then $\phi\psi(\cat[U],\cat[U]^\perp[1])=(\cat[U]_{\cat[U]\cap\cat[H]},\cat[U]_{\cat[U]\cap\cat[H]}^\perp[1])$ where
      \begin{align*}
      \cat[U]_{\cat[U]\cap\cat[H]}&=(\cat[U]\cap\cat[H])\vee\left(\bigvee_{j>0}\cat[H][j]\right)\subseteq \cat[U]\text{, and}	\\
      \cat[U]_{\cat[U]\cap\cat[H]}^\perp[1]&=\left(\bigvee_{j<0}\cat[H][j]\right)\vee (\cat[U]^\perp\cap\cat[H])\subseteq \cat[U]^\perp
      \end{align*}
      where, again, $\cat[H]$ is identified with $\cat[H][0]$.
      But then $\cat[U]={^\perp}(\cat[U]^\perp)\subseteq
      {^\perp}(\cat[U]^\perp_{\cat[U]\cap\cat[H]})=\cat[U]_{\cat[U]\cap\cat[H]}$.
      We conclude that $\cat[U]=\cat[U]_{\cat[U]\cap\cat[H]}$ and therefore
      $\phi\psi(\cat[U],\cat[U]^\perp[1])=(\cat[U],\cat[U]^\perp[1])$ as required.
   \end{proof}

   \begin{corollary}
      Let $(\cat[T],\cat[F])$ be a torsion pair in an hereditary category $\cat[H]$. Then the indecomposable objects in $\der^b(\cat[H])$ lying in $\cat[T]\vee\cat[F][1]$ are exactly the indecomposables in the heart $\cat[U]_{\cat[T]}\cap\cat[U]_{\cat[T]}[1]$ of the lifted $t$-structure.
   \end{corollary}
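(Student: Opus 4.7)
The plan is to obtain the statement by a direct unwinding of the explicit descriptions of $\cat[U]_{\cat[T]}$ and $\cat[U]_{\cat[T]}^\perp$ obtained in the proof of Proposition~\ref{p:2.4}, using the fact that $\der^b(\cat[H])$ has the very rigid shape $\bigvee_{j\in\ZZ}\cat[H][j]$ afforded by heredity, so that every indecomposable object of $\der^b(\cat[H])$ is of the form $X[j]$ for a unique $j\in\ZZ$ and some indecomposable $X\in\cat[H]$.

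First I would recall, from the proof of Proposition~\ref{p:2.4}, the identities
\[
\cat[U]_{\cat[T]}=\cat[T]\vee\Bigl(\bigvee_{j>0}\cat[H][j]\Bigr),\qquad \cat[U]_{\cat[T]}^\perp=\Bigl(\bigvee_{j<0}\cat[H][j]\Bigr)\vee\cat[F],
\]
and then shift the second one to get
\[
\cat[U]_{\cat[T]}^\perp[1]=\Bigl(\bigvee_{i\le 0}\cat[H][i]\Bigr)\vee\cat[F][1].
\]
Next I would intersect: an indecomposable $X[j]$ of $\der^b(\cat[H])$ lies in $\cat[U]_{\cat[T]}$ iff either $j>0$, or $j=0$ and $X\in\cat[T]$; and it lies in $\cat[U]_{\cat[T]}^\perp[1]$ iff either $j\le 0$, or $j=1$ and $X\in\cat[F]$. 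The only overlaps occur for $j=0$ with $X\in\cat[T]$ and for $j=1$ with $X\in\cat[F]$. This exhibits the indecomposables of the heart as precisely the indecomposables of $\cat[T]\vee\cat[F][1]$.

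Conversely, every indecomposable of $\cat[T]\vee\cat[F][1]$ is, by the same case analysis, visibly contained in both $\cat[U]_{\cat[T]}$ and $\cat[U]_{\cat[T]}^\perp[1]$, proving the converse inclusion and finishing the argument.

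There is essentially no obstacle here beyond carefully shifting the description of $\cat[U]_{\cat[T]}^\perp$ by $[1]$ and matching degrees; the real content is already in Proposition~\ref{p:2.4} and in the standard decomposition $\ind\der^b(\cat[H])=\bigsqcup_{j\in\ZZ}(\ind\cat[H])[j]$ for hereditary $\cat[H]$. The corollary is therefore best presented as a one-paragraph computation with no appeal to triangles or to Ext-vanishing beyond what is encoded in the two $\vee$-decompositions above.
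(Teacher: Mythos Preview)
Your proposal is correct and is essentially the same argument as the paper's, just written out in more detail: the paper observes in one line that indecomposables in the heart are concentrated in degrees $0$ and $1$ and hence coincide with those of $\cat[T]\vee\cat[F][1]$, while you make this explicit by intersecting the $\vee$-decompositions of $\cat[U]_{\cat[T]}$ and $\cat[U]_{\cat[T]}^\perp[1]$ from Proposition~\ref{p:2.4}.
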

   \begin{proof}
      The indecomposable objects in the heart are concentrated in degrees $0$ and $1$, and therefore coincide with the indecomposable objects lying in $\cat[T]\vee\cat[F][1]$.
   \end{proof}

   \section{The case of hereditary algebras}

   In this section, we assume our hereditary category to be of the form $\cat[H]=\mod H$, where $H$ is a representation-infinite hereditary algebra.
   The representation theory of such an algebra $H$ is well-known, see, for instance, \citep{ARS,ASS}.
   Indecomposable $H$-modules are divided into three classes: $\cat[P]$, consisting of the postprojective modules, $\cat[R]$, consisting of the regular, and $\cat[I]$, consisting of the preinjective.
   Moreover, $\mod H=\cat[P]\vee\cat[R]\vee\cat[I]$ and any morphism from an object in $\cat[P]$ to one in $\cat[I]$ factors through the additive category $\add\cat[R]$ generated by $\cat[R]$.
   Also, the derived category $\der^b(\mod H)$ is described, for instance, in~\citep{H1}.
   Its indecomposable objects are also divided into classes: $\cat_j$, consisting of the transjective objects, and $\cat[R]_j$, of the regular ones, with $j$ running over $\ZZ$.
   These are related to $H$-modules as follows.
   We have $\cat_0=\cat[I][-1]\vee\cat[P]$ and, for each $j$, $\cat_j=\cat_0[j]$.
   Similarly,
   $\cat[R]_0=\cat[R]$ and $\cat[R]_j=\cat[R][j]$ for each $j$.
   We then have $\der^b(\mod H)=\bigvee_{j\in\ZZ}(\cat_j\vee\cat[R]_j)$ and any morphism from $\cat_j$ to $\cat_{j+1}$ factors through $\add\cat[R]_j$. The following picture (with morphisms going from left to right) may be helpful for the reader.
\vskip 1cm
\begin{figure}[h!]

 
\includegraphics[scale=0.98]{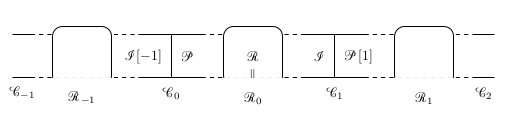}
 
 
\end{figure}
 
   \begin{lemma}\label{l:3.1}
      Let $H$ be a representation-infinite hereditary algebra.
      \begin{itemize}
         \item [(a)] If $\cat[U]$ is an aisle in $\der^b(\mod H)$ such that $\cat_1\subseteq \cat[U]$, then $\bigvee_{j>0}(\cat_j\vee\cat[R]_j)\subseteq \cat[U]$.
             
         \item [(b)] If $\cat[V]$ is a coaisle in $\der^b(\mod H)$ such that $\cat_0\subseteq \cat[V]$, then $\bigvee_{j<0}(\cat_j\vee\cat[R]_j)\vee\cat_0\subseteq \cat[V]$.
      \end{itemize}
   \end{lemma}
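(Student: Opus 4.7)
The plan is to exploit the closure properties of aisles together with the structural description of $\der^b(\mod H)$ recalled in the paragraph preceding the lemma. Since an aisle is a suspended subcategory, $\cat[U]$ is closed under positive shifts, so the hypothesis $\cat_1 \subseteq \cat[U]$ at once yields $\cat_j = \cat_1[j-1] \subseteq \cat[U]$ for every $j \geq 1$. What remains is to prove $\cat[R]_j \subseteq \cat[U]$ for $j \geq 1$, and by the same shift argument this reduces to showing $R[1] \in \cat[U]$ for each indecomposable regular $R \in \cat[R]$.

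The key representation-theoretic input is that regular modules can be resolved by preinjectives on the right. Indeed, over a hereditary algebra every injective envelope is preinjective, and the class of injectives is closed under quotients (because $\Ext^2 = 0$ forces $\Ext^1(-,Q) = 0$ whenever $Q$ is a quotient of an injective). Hence for $R \in \cat[R]$ there is a short exact sequence $0 \to R \to I \to C \to 0$ with $I, C \in \cat[I]$. Lifting to a triangle in $\der^b(\mod H)$ and rotating twice gives
\[
C \To R[1] \To I[1] \To C[1].
\]
Now $C \in \cat[I] \subseteq \cat_1 \subseteq \cat[U]$, and $I[1] \in \cat[U]$ because the aisle is closed under positive shifts, so extension closure forces the middle term $R[1]$ to lie in $\cat[U]$, as required.

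Part (b) follows by the dual argument. The coaisle $\cat[V]$ is cosuspended, so the hypothesis $\cat_0 \subseteq \cat[V]$ gives $\cat_j = \cat_0[j] \subseteq \cat[V]$ for all $j \leq 0$, and it suffices to produce $R[-1] \in \cat[V]$ for $R \in \cat[R]$. The dual input is that every finitely generated $H$-module has a length-one projective resolution by postprojectives: for a regular $R$ one has $0 \to P_1 \to P_0 \to R \to 0$ with $P_0, P_1 \in \cat[P] \subseteq \cat_0 \subseteq \cat[V]$. Shifting the corresponding triangle by $-1$ and rotating produces
\[
P_0[-1] \To R[-1] \To P_1 \To P_0,
\]
and since both $P_0[-1]$ and $P_1$ belong to $\cat[V]$, extension closure delivers $R[-1] \in \cat[V]$.

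The only mildly delicate point is choosing the correct rotation of the defining triangle so that the desired term ($R[1]$ or $R[-1]$) sits in the middle, where extension closure can be applied, rather than at an end. Once this is arranged, the argument is essentially formal and uses no representation-theoretic input beyond the two basic facts recalled above about injective envelopes and first syzygies of regular modules.
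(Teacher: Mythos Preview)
Your proof is correct and takes a genuinely different route from the paper's.

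The paper argues via the contravariant finiteness of the aisle: for an indecomposable $Y\in\cat[R]_j$ it takes the approximation triangle $X\to Y\to Z\to X[1]$ with $X\in\cat[U]$, $Z\in\cat[U]^\perp$, and then uses the morphism structure of $\der^b(\mod H)$ to locate the possible indecomposable summands of $Z$. Since every regular object receives a nonzero map from some transjective object (hence cannot lie in $\cat[U]^\perp$) and the transjective components with index $>j$ are already in $\cat[U]$, one concludes $Z=0$, so $Y$ is a summand of $X\in\cat[U]$.

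Your argument is more direct: you exhibit $R[1]$ explicitly as an extension of objects already known to be in $\cat[U]$, via the length-one injective coresolution $0\to R\to I\to C\to 0$ with $I,C$ injective (hence in $\cat[I]\subseteq\cat_1$). This uses only the \emph{suspended} structure of $\cat[U]$ (closure under positive shifts and extensions), never the full aisle property; the representation-theoretic input is just that quotients of injectives are injective over a hereditary algebra. The dual for (b) via projective resolutions is equally clean. So your proof is more elementary and in fact proves a slightly stronger statement (it works for any suspended subcategory containing $\cat_1$, not just aisles), while the paper's argument stays closer to the approximation-triangle machinery used throughout the rest of the article.
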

   \begin{proof}
      We only prove (a), because the proof of (b) is dual. If $\cat_1\subseteq \cat[U]$, then, for each $j>0$, we have $\cat_j=\cat_1[j-1]\subseteq\cat[U]$.
      Now consider $\cat[R]_j$ for some $j>0$.
      If $Y\in\cat[R]_j$, there exists $X\in\cat_j$ such that $\Hom_{\der^b(\mod H)}(X,Y)\ne 0$.
      Because $\cat_j\subseteq\cat[U]$, we get $Y\notin\cat[U]^\perp$.
      In particular, $\cat[R]_j\cap\cat[U]^\perp=0$ for all $j\in\ZZ$.
      We now prove that $Y\in\cat[U]$.
      Consider the triangle
      $X \longrightarrow Y \longrightarrow Z \longrightarrow X[1]$
      with $X\in\cat[U]$, $Z\in\cat[U]^\perp$.
      Let $Z'$ be an indecomposable summand of $Z$.
      Then $Z'\notin\cat_t$, for any $t\ge j+1$, because $\cat_t\subseteq \cat[U]$.
      Therefore, $Z'\in\cat[R]_t$ for some $t\ge j$.
      However, $\cat[R]_t\cap\cat[U]^\perp=0$ for $t\ge j>0$, a contradiction.
      Therefore $g=0$ and so $Y$ is a direct summand of $X$.
      In particular, $X\in\cat[U]$.
   \end{proof}

   As a first corollary, we consider split torsion pairs and $t$-structures induced by sections.
   For sections in translation quivers, we refer the reader to~\citep{ASS} and recall that faithful sections are complete slices.
   We need the following notation. Let $\Sigma$ be a section in a translation quiver $\Gamma$.
   We denote by $\Succ\Sigma$ the set of all successors of $\Sigma$ in $\Gamma$, that is, of all $x$ in $\Gamma$ such that there exist $e$ in $\Sigma$ and a sequence of arrows 
   $e=x_0 \longrightarrow x_1 \longrightarrow\cdots \longrightarrow x_t=x$
   in $\Gamma$.

   \begin{corollary}\label{c:3.2}
      Let $H$ be a representation-infinite hereditary algebra.
      The lift and the trace maps restrict to inverse bijections between:
      \begin{itemize}
         \item [(a)] Split torsion pairs $(\cat[T],\cat[F])$ in $\mod H$ such that all indecomposable Ext-projectives in $\cat[T]$ form a section $\Sigma$ in $\Gamma(\mod H)$.
             
         \item [(a)]Split $t$-structures $(\cat[U],\cat[U]^\perp[1])$ in $\der^b(\mod H)$ such that all indecomposable \linebreak[4]Ext-projectives in $\cat[U]$ form a section $\Sigma$ in $\Gamma(\der^b(\mod H))$.
      \end{itemize}
   \end{corollary}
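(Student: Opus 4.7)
I specialise the bijection of Proposition \ref{p:2.4} to the subclasses defined by the splitting and section conditions, checking that each is preserved by the lift and trace maps. Preservation of splitting is immediate from the descriptions $\cat[U]_{\cat[T]}=\cat[T]\vee\bigvee_{j>0}\cat[H][j]$ and $\cat[U]_{\cat[T]}^\perp=\cat[F]\vee\bigvee_{j<0}\cat[H][j]$ used in the proof of Proposition \ref{p:2.4}: any indecomposable $M[j]$ of $\der^b(\mod H)$ lies in $\cat[U]_{\cat[T]}$ for $j\ge 1$, in $\cat[U]_{\cat[T]}^\perp$ for $j\le -1$, and for $j=0$ is sorted by the splitting of $(\cat[T],\cat[F])$; the reverse direction follows by restricting the splitting of the $t$-structure to $\cat[H][0]\subseteq\der^b(\mod H)$.

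Next I identify the indecomposable Ext-projectives of $\cat[U]_{\cat[T]}$ via the Serre-duality criterion: $X$ is Ext-projective iff $\tau X\in\cat[U]_{\cat[T]}^\perp$. The Auslander--Reiten translate in $\der^b(\mod H)$ satisfies $\tau(M[j])=\tau_H M[j]$ when $M$ is non-projective and $\tau(P[j])=\nu P[j-1]$ when $P$ is projective, so $\tau X$ always lies in $\cat[H][j-1]\vee\cat[H][j]$. This forces the Ext-projectives to concentrate in $\cat[H][0]\vee\cat[H][1]$, and a short case analysis identifies them as (i) the Ext-projectives of $\cat[T]$ inside $\mod H$, together with (ii) the shifts $P[1]$ of indecomposable projective modules $P$ for which $\nu P\in\cat[F]$.

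Matching these to sections is the crux. In $\Gamma(\der^b(\mod H))$ the $\tau$-orbit of an indecomposable projective $P$ is linked, at the shifted level, to the orbit of its Nakayama companion via $\tau(P[1])=\nu P$, so the additional Ext-projectives of type (ii) fill in precisely those orbits which the $\mod H$-section $\Sigma$ meets only through injectives in $\cat[F]$. Verifying the section axioms (single-orbit intersection and convexity) then promotes $\Sigma$, augmented by these $P[1]$'s, to a section in $\Gamma(\der^b(\mod H))$. Conversely, given a split $t$-structure whose Ext-projectives form a section in some component of $\Gamma(\der^b(\mod H))$, I shift so that this component lies inside $\cat_0\vee\cat_1$; Lemma \ref{l:3.1} then supplies the hypotheses $\cat[H][1]\subseteq\cat[U]$ and $\cat[H][-1]\subseteq\cat[U]^\perp$ required by Proposition \ref{p:2.4}, and the trace produces a torsion pair whose Ext-projectives form the desired section of $\Gamma(\mod H)$. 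The main obstacle is exactly this $\tau$-orbit bookkeeping: orbits of projectives truncate at the projectives in $\mod H$ but continue in $\der^b(\mod H)$ via the Nakayama functor, and the Ext-projectives of type (ii) are precisely what is needed to bridge this truncation in a way compatible with the section axioms.
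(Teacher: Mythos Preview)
Your Serre-duality bookkeeping of the Ext-projectives is correct as far as it goes, but you have missed the structural shortcut that makes the paper's proof a few lines long. The condition in (a) forces the section $\Sigma$ to lie in the postprojective or the preinjective component of $\Gamma(\mod H)$: an indecomposable Ext-projective of a split torsion class cannot sit in a stable tube or a $\ZZ\AA_\infty$-component (exactly the argument of Lemma~\ref{l:6.1}(a), carried out in $\mod H$). Once $\Sigma\subseteq\cat[P]$ or $\Sigma\subseteq\cat[I]$, one has $\cat[T]=\Succ\Sigma\vee\cat[R]\vee\cat[I]$ or $\cat[T]=\Succ\Sigma$ respectively; in either case every indecomposable injective lies in $\cat[T]$, so your type~(ii) objects $P[1]$ with $\nu P\in\cat[F]$ \emph{never occur}. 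The section in $\Gamma(\der^b(\mod H))$ is therefore $\Sigma$ itself, viewed inside $\cat_0$ or $\cat_1$, and no orbit needs ``filling in''. The paper then simply writes down the lifted aisle explicitly via Lemma~\ref{l:3.1} and reads off the section property.

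Your converse also contains a genuine gap. Shifting a $t$-structure so that its section lands in $\cat_0\vee\cat_1$ replaces it by a different $t$-structure; but the corollary is about the literal maps $\phi$ and $\psi$, and $\psi$ is only defined for $t$-structures with $\mod H[1]\subseteq\cat[U]$ and $\mod H[-1]\subseteq\cat[U]^\perp$. The point is rather that the $\Sigma$ appearing in (b) is the \emph{same} $\Sigma$ as in (a), regarded as a subquiver of $\Gamma(\mod H)\subseteq\Gamma(\der^b(\mod H))$; the paper's ``converse'' is just the observation that tracing an aisle of the explicit form $\Succ\Sigma\vee\cat[R]_i\vee\bigvee_{j>i}(\cat_j\vee\cat[R]_j)$ (with $\Sigma\subseteq\cat[P]$ or $\Sigma\subseteq\cat[I]$) visibly returns the torsion pair one started from. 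Your more elaborate analysis would be needed if one allowed arbitrary sections in the transjective components of $\der^b(\mod H)$, but that is not what the corollary asserts.
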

   \begin{proof}
      First, because of proposition~\ref{p:2.4} and their very definitions, the lift and the trace maps restrict to inverse bijections between split torsion pairs in $\mod H$ and split $t$-structures $(\cat[U],\cat[U]^\perp[1])$ in $\der^b(\mod H)$ such that $\mod H[1]\subseteq\cat[U]$ and $\mod H[-1]\subseteq\cat[U]^\perp$.
      Clearly, if $(\cat[T],\cat[F])$ is a split torsion pair as in (a), then $\cat[T]=\Succ\Sigma$ if $\Sigma$ is in $\cat[I]$, while $\cat[T]=\Succ\Sigma\vee\cat[R]\vee\cat[I]$ if $\Sigma$ lies in $\cat[P]$.
      Because of lemma~\ref{l:3.1} above, in the first case, it lifts to the $t$-structure $(\cat[U],\cat[U]^\perp[1])$ such that $\cat[U]=\Succ\Sigma\vee\cat[R]_1\vee\left(\bigvee_{j>1}(\cat_j\vee\cat[R]_j)\right)$, and in the second case, it lifts to the $t$-structure $(\cat[U],\cat[U]^\perp[1])$ such that $\cat[U]=\Succ\Sigma\vee\cat[R]_0\vee\left(\bigvee_{j>0}(\cat_j\vee\cat[R]_j)\right)$.
      Conversely, taking the trace of a $t$-structure of one of these two types in $\mod H$ yields a torsion pair of the required form.
   \end{proof}

   We are now able to state and prove the main result of this section.
   Observe that the two conditions $\cat_1\subseteq\cat[U]$ and $\cat_0\subseteq\cat[U]^\perp$ are equivalent to the sole condition $\cat_1\subseteq\cat[U]\cap\cat[U]^\perp[1]$, that is, $\cat_1$ is contained in the heart.

   \begin{thm}\label{t:3.3}
      Let $H$ be a representation-infinite hereditary algebra. The lift and trace maps restrict to inverse bijections between the class of all torsion pairs
      $\torsion TF$ in $\mod H$ such that $\cat[I]\subseteq\cat[T]$, $\cat[P]\subseteq\cat[F]$ and the class of all $t$-structures $\torsion U{U^\perp[1]}$ in $\der^b(\mod H)$ such that $\cat_1\subseteq \cat[U]\cap\cat[U]^\perp[1]$.
   \end{thm}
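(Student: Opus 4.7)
The plan is to reduce the statement to Proposition~\ref{p:2.4} by verifying that the specified subclasses correspond to each other under the lift/trace correspondence. Since Proposition~\ref{p:2.4} already establishes a bijection between all torsion pairs in $\mod H$ and all $t$-structures $(\cat[U],\cat[U]^\perp[1])$ satisfying $\mod H[1]\subseteq\cat[U]$ and $\mod H[-1]\subseteq\cat[U]^\perp$, my only task is to check that $\phi$ sends torsion pairs with $\cat[I]\subseteq\cat[T]$, $\cat[P]\subseteq\cat[F]$ to $t$-structures with $\cat_1$ in the heart, and that $\psi$ goes the other way. I will use throughout the identifications $\cat_0=\cat[I][-1]\vee\cat[P]$ and $\cat_1=\cat[I]\vee\cat[P][1]$ coming from the description of $\der^b(\mod H)$.

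\textbf{Forward direction.} Suppose $(\cat[T],\cat[F])$ is a torsion pair with $\cat[I]\subseteq\cat[T]$ and $\cat[P]\subseteq\cat[F]$. Using the explicit formulas
\[
\cat[U]_{\cat[T]}=\cat[T]\vee\Bigl(\bigvee_{j>0}\mod H[j]\Bigr),\qquad \cat[U]_{\cat[T]}^\perp=\Bigl(\bigvee_{j<0}\mod H[j]\Bigr)\vee\cat[F]
\]
from the proof of Proposition~\ref{p:2.4}, the inclusions $\cat[I]\subseteq\cat[T]\subseteq\cat[U]_{\cat[T]}$ and $\cat[P][1]\subseteq\mod H[1]\subseteq\cat[U]_{\cat[T]}$ give $\cat_1\subseteq\cat[U]_{\cat[T]}$. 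Likewise $\cat[I][-1]\subseteq\mod H[-1]\subseteq\cat[U]_{\cat[T]}^\perp$ and $\cat[P]\subseteq\cat[F]\subseteq\cat[U]_{\cat[T]}^\perp$ yield $\cat_0\subseteq\cat[U]_{\cat[T]}^\perp$, that is, $\cat_1\subseteq\cat[U]_{\cat[T]}^\perp[1]$. Hence $\cat_1$ lies in the heart.

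\textbf{Reverse direction.} Suppose $(\cat[U],\cat[U]^\perp[1])$ is a $t$-structure with $\cat_1\subseteq\cat[U]\cap\cat[U]^\perp[1]$. The step where lemma~\ref{l:3.1} intervenes is verifying that the trace map is applicable, i.e.\ that $\mod H[1]\subseteq\cat[U]$ and $\mod H[-1]\subseteq\cat[U]^\perp$. From $\cat_1\subseteq\cat[U]$, lemma~\ref{l:3.1}(a) gives $\bigvee_{j>0}(\cat_j\vee\cat[R]_j)\subseteq\cat[U]$; decomposing $\mod H[1]=\cat[P][1]\vee\cat[R]_1\vee\cat[I][1]$ and noting $\cat[P][1]\subseteq\cat_1$, $\cat[I][1]\subseteq\cat_2$, we obtain $\mod H[1]\subseteq\cat[U]$. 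Dually, $\cat_0\subseteq\cat[U]^\perp$ together with lemma~\ref{l:3.1}(b) yields $\mod H[-1]\subseteq\cat[U]^\perp$. Proposition~\ref{p:2.4} then applies, so $\psi(\cat[U],\cat[U]^\perp[1])=(\cat[U]\cap\mod H,\cat[U]^\perp\cap\mod H)$ is a torsion pair, and the required inclusions $\cat[I]\subseteq\cat_1\subseteq\cat[U]$ and $\cat[P]\subseteq\cat_0\subseteq\cat[U]^\perp$ are immediate from the hypothesis.

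Combining both directions with Proposition~\ref{p:2.4} completes the proof. The only nontrivial step is the reverse direction's application of lemma~\ref{l:3.1} to promote the hypothesis on $\cat_1$ into the global hypothesis $\mod H[\pm 1]\subseteq\cat[U],\cat[U]^\perp$ needed to invoke the earlier bijection; the remainder of the argument is a bookkeeping check using the transjective decomposition.
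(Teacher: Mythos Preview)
Your proof is correct and follows essentially the same approach as the paper: both arguments use Lemma~\ref{l:3.1} to promote the hypothesis $\cat_1\subseteq\cat[U]\cap\cat[U]^\perp[1]$ to the conditions $\mod H[1]\subseteq\cat[U]$ and $\mod H[-1]\subseteq\cat[U]^\perp$, thereby reducing to the bijection of Proposition~\ref{p:2.4} (via Corollary~\ref{c:2.3}), and then read off the inclusions $\cat[I]\subseteq\cat[T]$, $\cat[P]\subseteq\cat[F]$ from $\cat[I]=\cat_1\cap\mod H$ and $\cat[P]=\cat_0\cap\mod H$. Your write-up is in fact slightly more streamlined, since you invoke Proposition~\ref{p:2.4} directly for the inverse-bijection claim rather than re-deriving the explicit form of $\cat[U]_{\cat[T]}$ as the paper does at the end.
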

   \begin{proof}
      Let $\torsion U{U^\perp[1]}$ be a $t$-structure in $\der^b(\mod H)$ such that $\cat_1\sset\cat[U]$ and $\cat_0\sset\cat[U]^\perp$, and $\torsion TF$ is its trace, that is, $\cat[T]=\cat[U]\cap\mod H$ and $\cat[F]=\cat[U]^\perp\cap\mod H$.
      We claim that $\torsion TF$ is a torsion pair in $\mod H$ such that
      $\cat[I]\subseteq\cat[T]$, $\cat[P]\subseteq\cat[F]$.
      That $\torsion TF$ is a torsion pair follows from corollary~\ref{c:2.3}
      and the fact that, because of the hypothesis and lemma~\ref{l:3.1}, we have $\mod H[1]\sset\cat_1\vee\cat[R]_1\vee\cat_2\sset\cat[U]$ and $\mod H[-1]\sset \cat_{-1}\vee\cat[R]_{-1}\vee\cat_0\sset \cat[U]^\perp$.
      Moreover, $\cat[I]=\cat_1\cap\mod H\sset\cat[U]\cap\mod H=\cat[T]$, so that
      $\cat[I]\sset\cat[T]$.
      Similarly, $\cat[F]$ contains $\cat[P]=\cat_0\cap\mod H$.

      Conversely, let $\torsion TF$ be a torsion pair in $\mod H$ such that $\cat[I]\subseteq\cat[T]$, $\cat[P]\subseteq\cat[F]$, and let $\torsion {U_{\cat[T]}}{U_{\cat[T]}^\perp[1]}$ denote its lift to $\der^b(\mod H)$.
      We claim that $\cat_1\sset\cat[U]_{\cat[T]}$ and $\cat_0\sset\cat[U]_{\cat[T]}^\perp$.
      Let $X\in\cat_1$. If $X$ is an $H$-module, then $X\in\cat[I]\sset\cat[T]\sset\cat[U]_{\cat[T]}$.
      If not, then $X=M[1]$ for some $H$-module $M$.
      Taking cohomology, we get $\HH^{-1}(X)=M$ and $\HH^j(X)=0$ for all $j\ne -1$.
      In particular, $X\in\cat[U]_{\cat[T]}$.
      Similarly, $\cat_0\sset\cat[U]_{\cat[T]}^\perp$.

      Because of lemma~\ref{l:3.1}, we have
      $\bigvee_{j>0}\mod H[j]\sset\cat[U]_{\cat[T]}$ and
      $\bigvee_{j<0}\mod H[j]\sset\cat[U]_{\cat[T]}^\perp$.
      Then $\cat[U]_{\cat[T]}\cap\cat[U]_{\cat[T]}^\perp=0$ yields
      \begin{align*}
      \cat[U]_{\cat[T]}			&= \left(\cat[U]_{\cat[T]}\cap\mod H\right)\vee \left(\bigvee_{j>0}\mod H[j]\right)\\
      \cat[U]_{\cat[T]}^\perp &= \left(\bigvee_{j<0}\mod H[j]\right)\vee \left(\cat[U]_{\cat[T]}^\perp\cap\mod H\right).
      \end{align*}
      It is now clear that the lift and the trace maps are inverse bijections.
   \end{proof}

   For future reference, it is useful to observe that, because of their definitions, the lift and trace maps also restrict to inverse bijections between split torsion pairs and split $t$-structures satisfying the conditions of the theorem.

   Let $H$ be a wild hereditary algebra and $M$ a quasisimple module.
   Following~\citep{AK}, we define the \dfn{left cone} ($\longrightarrow M$) to be the full subcategory of $\mod H$ generated by all the indecomposable $H$-modules $X$ such that there is a path of irreducible morphisms
   ${X=M_0} \longrightarrow {M_1} \longrightarrow{\cdots} \longrightarrow {M_t=M}$
   with all $M_i$ indecomposable.
   The \dfn{right cone} ($\longrightarrow {M}~~{\relax}$) is defined dually.

   \begin{corollary}
      Let $H$ be a representation-infinite hereditary algebra. Then $\cat[U]$ is an aisle in $\der^b(\mod H)$ without Ext-projectives and such that $\cat_1\sset \cat[U]\cap\cat[U]^\perp[1]$ if and only if one of the following two statements holds:
      \begin{itemize}
         \item [(a)]$\torsion U{U^\perp[1]}$ is a split $t$-structure with no Ext-projective objects, or
         
         \item [(b)] $H$ is wild, and each regular component $\Gamma$ of the Auslander-Reiten quiver $\Gamma(\mod H)$ contains quasisimple modules $M_\Gamma$, $N_\Gamma$ such that
         \begin{align*}
         \cat[U] &= \bigvee_{\Gamma}(\longrightarrow {M_\Gamma})\vee\left(\bigvee_{j>0}(\cat_j\vee\cat[R]_j)\right)\text{, and}\\
         \cat[U]^\perp &= \bigvee_{j<0}(\cat_j\vee\cat[R]_j)\vee \left(\bigvee_{\Gamma}(\longrightarrow {N_\Gamma}->{\relax})\right).
         \end{align*}
      \end{itemize}
   \end{corollary}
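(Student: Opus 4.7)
The strategy is to use Theorem~\ref{t:3.3} to translate the question into one about torsion pairs in $\mod H$, and then to analyse the no-Ext-projective condition via Serre duality. By that theorem, $\cat[U]$ corresponds bijectively to a torsion pair $\torsion TF$ in $\mod H$ with $\cat[I]\sset\cat[T]$ and $\cat[P]\sset\cat[F]$, via $\cat[U]=\cat[T]\vee\bigvee_{j>0}\mod H[j]$ and $\cat[U]^\perp=\bigvee_{j<0}\mod H[j]\vee\cat[F]$.

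My first step is to locate the possible Ext-projectives of $\cat[U]$. By Serre duality, an indecomposable $X\in\cat[U]$ is Ext-projective if and only if $\tau_{\der^b(\mod H)}X\in\cat[U]^\perp$. A short case analysis on the position of $X$ in the derived category, appealing to Lemma~\ref{l:3.1} and the hypothesis $\cat[I]\sset\cat[T]$, shows that no preinjective module and no indecomposable in $\mod H[j]$ for $j>0$ can be Ext-projective: in each such case $\tau X$ is already in $\cat[U]$. This leaves the regular modules of $\cat[T]$ as the only candidates, and such $X\in\cat[T]\cap\cat[R]$ is Ext-projective exactly when $\tau X\in\cat[F]\cap\cat[R]$. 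Thus ``$\cat[U]$ has no Ext-projectives'' is equivalent to asking that $\cat[T]\cap\Gamma$ be $\tau$-closed in every regular component $\Gamma$.

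I then plan to analyse the situation component by component, according to the representation type of $H$. If $H$ is representation-finite, $\cat[R]$ is empty and the $t$-structure is trivially split, yielding (a). If $H$ is tame, every regular component $\Gamma$ is a stable tube of finite period, and I will argue that any indecomposable $X\in\cat[T]\cap\Gamma$ of quasi-length $\ell$ forces, by $\tau$-closure together with the finite period, every indecomposable of quasi-length $\ell$ in $\Gamma$ to lie in $\cat[T]\cap\Gamma$; taking successive quotients then exhibits all quasisimples of $\Gamma$, and closure under extensions produces all of $\Gamma$. So $\cat[T]\cap\Gamma$ is $0$ or $\Gamma$, the torsion pair is split, and we land in (a).

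In the wild case, each regular component has the shape $\ZZ A_\infty$. If every $\cat[T]\cap\Gamma$ happens to be $0$ or $\Gamma$ we are still in (a); otherwise, for the components where $\cat[T]\cap\Gamma$ is a proper nontrivial $\tau$-closed torsion class, I will exploit the mesh structure of $\ZZ A_\infty$ and the description of left and right cones from~\citep{AK} to identify $\cat[T]\cap\Gamma=(\longrightarrow M_\Gamma)$ for a unique quasisimple $M_\Gamma$ at the mouth of $\Gamma$, and dually $\cat[F]\cap\Gamma=(\longrightarrow N_\Gamma->{\relax})$. Gluing these component-wise descriptions with the global contributions of $\bigvee_{j>0}(\cat_j\vee\cat[R]_j)$ and $\bigvee_{j<0}(\cat_j\vee\cat[R]_j)$ then yields the formulas in~(b). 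The converse direction, that the subcategories listed in (b) really form an aisle with no Ext-projectives, will be carried out by verifying that the prescribed $\cat[T]$ and $\cat[F]$ form a torsion pair in $\mod H$ containing $\cat[I]$ and $\cat[P]$ respectively, and then invoking Theorem~\ref{t:3.3}; the absence of Ext-projectives is manifest from the $\tau$-closure of each left cone. The hard part will be the classification step in the wild case, where $\tau$-closure must be combined with closure under quotients and extensions in $\ZZ A_\infty$ in exactly the right way to rule out more exotic proper nontrivial $\tau$-closed torsion classes and single out precisely the left cones.
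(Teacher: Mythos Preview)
Your approach is the paper's approach: reduce via Theorem~\ref{t:3.3} to torsion pairs $(\cat[T],\cat[F])$ in $\mod H$ with $\cat[I]\subseteq\cat[T]$, $\cat[P]\subseteq\cat[F]$, and then classify those without Ext-projectives. The paper's proof is a one-liner that cites \cite{AK}, Theorem~(B), for this classification; you have instead sketched a direct proof of that theorem. That is fine, but two points deserve attention.

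First, drop the representation-finite case: the hypothesis is that $H$ is representation-infinite, so that paragraph is moot.

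Second, and more substantively, your wild-case outline treats the regular components one at a time and says ``for the components where $\cat[T]\cap\Gamma$ is a proper nontrivial $\tau$-closed torsion class\ldots''. As written this leaves open the possibility that some components have $\cat[T]\cap\Gamma$ a proper left cone while others have $\cat[T]\cap\Gamma\in\{0,\Gamma\}$, which would fit neither~(a) nor~(b) (in~(b), \emph{each} component must carry a quasisimple $M_\Gamma$). You need the uniformity across components: if one component is entirely in $\cat[T]$ then all are, and dually; hence in the non-split case \emph{every} component meets both $\cat[T]$ and $\cat[F]$. This follows from Kerner's result \cite{K}(1.3) that for any two indecomposable regular modules $X,Y$ there exists $t>0$ with $\Hom_H(X,\tau^tY)\ne 0$, combined with the $\tau$-stability you have already established. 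This cross-component argument is part of what \cite{AK}, Theorem~(B), packages; make sure it appears explicitly if you unpack that reference.
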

   \begin{proof}
      This follows at once from theorem~\ref{t:3.3} and \cite{AK}, theorem~(B).
   \end{proof}

   \section{Piecewise hereditariness}

   We now start our study of split torsion pairs / $t$-structures.
   Our objective in this section is to prove that the mere existence of a split $t$-structure with nontrivial heart in the bounded derived category of a finite dimensional algebra $A$ suffices to imply that the module category of such an algebra is derived equivalent to a hereditary category $\cat[H]$, so that we only need to study the split $t$-structures in the derived category $\der^b(\cat[H])$.

   We recall that an algebra $A$ is \dfn{piecewise hereditary} if $\mod A$ is derived equivalent to an hereditary category $\cat[H]$, see~\cite{HRS2}.
   Typical example of piecewise hereditary algebras are the quasitilted algebras of~\cite{HRS1} and the iterated tilted algebras of~\cite{H1}.

   \begin{lemma}\label{l:4.1}
      Let $\cat[K]$ be a Krull-Schmidt triangulated category, and $\torsion U{U^\perp[1]}$ be a split $t$-structure in $\cat[K]$.
      Then $\cat[U]$ is triangulated if and only if the heart $\cat[U]\cap\cat[U]^\perp[1]$ is zero.
   \end{lemma}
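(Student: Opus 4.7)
The plan is to establish the two implications separately, noting at the outset that only the direction ``heart zero $\Rightarrow$ $\cat[U]$ triangulated'' genuinely uses the splitting hypothesis; the reverse implication is a formal consequence of the $t$-structure axioms.

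For $(\Leftarrow)$, suppose the heart is zero. Since $\cat[U]$ is suspended it is already closed under $[1]$, extensions, and direct summands, so to conclude that $\cat[U]$ is triangulated it suffices to check that $\cat[U][-1]\subseteq\cat[U]$. I would pick an indecomposable $X\in\cat[U]$, observe that $X[-1]$ is again indecomposable by Krull--Schmidt, and invoke the splitting assumption to place $X[-1]$ in either $\cat[U]$ or $\cat[U]^\perp$. The second option would give $X\in\cat[U]\cap\cat[U]^\perp[1]$, i.e.\ $X$ in the heart; since the heart is zero this contradicts the indecomposability of $X$, leaving $X[-1]\in\cat[U]$ as desired.

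For $(\Rightarrow)$, I would start from the orthogonality axiom of a $t$-structure, which forces $\Hom_{\cat[K]}(\cat[U],\cat[U]^\perp)=0$ and hence $\cat[U]\cap\cat[U]^\perp=0$ (the identity of any common object must vanish). If $\cat[U]$ is triangulated, then it is closed under $[-1]$, so $\cat[U]\subseteq\cat[U][1]$; intersecting with $\cat[U]^\perp[1]$ immediately collapses the heart to $(\cat[U]\cap\cat[U]^\perp)[1]=0$.

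Neither direction is technically delicate. The only step requiring a moment's care is the appeal to Krull--Schmidt in $(\Leftarrow)$, needed because splitting is formulated only on indecomposables; apart from that, the argument is pure bookkeeping with the definitions.
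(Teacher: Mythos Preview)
Your proof is correct and follows essentially the same argument as the paper's. The paper compresses both implications into a single chain of equivalences (``$\cat[U]$ not triangulated $\Leftrightarrow$ some indecomposable $X\in\cat[U]$ has $X[-1]\notin\cat[U]$ $\Leftrightarrow$ $X[-1]\in\cat[U]^\perp$ $\Leftrightarrow$ the heart contains an indecomposable''), whereas you separate the two directions and make explicit the useful observation that only $(\Leftarrow)$ requires the splitting hypothesis; the underlying ideas are identical.
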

   \begin{proof}
      Indeed, $\cat[U]$ is not triangulated if and only if there exists an indecomposable object $X\in\cat[U]$ such that $X[-1]\notin\cat[U]$.
      Because $\torsion U{U^\perp[1]}$ is split, $X[-1]\notin\cat[U]$ means that $X[-1]\in\cat[U]^\perp$ or, equivalently, $X\in\cat[U]^\perp[1]$.
      Then $\cat[U]$ is not triangulated if and only if there exists an indecomposable object in the heart $\cat[U]\cap\cat[U]^\perp[1]$.
   \end{proof}

   Before quoting our next result, we need some terminology.
   Let $\cat[K]$ be a Krull-Schmidt triangulated category and $X$, $Y$ be two indecomposable objects in $\cat[K]$.
   A \dfn{semipath} from $X$ to $Y$ (path in the terminology of~\cite{R2}) is a sequence $(X=X_0,X_1,\dotsc,X_n=Y)$
   of indecomposable objects $X_i$ in $\cat[K]$ such that, for each $i$, we have $\Hom_{\cat[K]}(X_i,X_{i+1})\ne 0$ or $X_{i+1}=X_i[1]$.
   In the latter case, we say that a \dfn{jump} occurs.
   Thus, a semipath without jumps is a path in the sense of~\cite{HRS1}.
   A \dfn{semiwalk} from $X$ to $Y$ is a sequence $(X=X_0,X_1,\dotsc,X_n=Y)$ of indecomposable objects $X_i$ in $\cat[K]$ such that, for each $i$, one of the following three conditions occurs: $\Hom_{\cat[K]}(X_i,X_{i+1})\ne 0$, $\Hom_{\cat[K]}(X_{i+1},X_{i})\ne 0$ or $X_{i+1}=X_i[s]$ for some $s\in\ZZ$.
   The category $\cat[K]$ is called \dfn{semiconnected} (path-connected in the terminology of~\cite{R2}) if, given any two indecomposable objects $X$, $Y$ in $\cat[K]$, there exists a semiwalk from $X$ to $Y$.
   It is shown in~\cite{R2} that, if $\cat[K]$ is a semiconnected Krull-Schmidt triangulated category, then $\cat[K]$ is the derived category of a hereditary category if and only if there exists an indecomposable object $X$ in $\cat[K]$ with no semipath from $X[1]$ to $X$.

   \begin{lemma}\label{l:4.2}
      Let $\cat[K]$ be a Krull-Schmidt triangulated category, and $\torsion U{U^\perp[1]}$ a split $t$-structure in $\cat[K]$.
      Then $\cat[K]$ contains no semipath from an indecomposable object in $\cat[U]$ to one in $\cat[U]^\perp$.
   \end{lemma}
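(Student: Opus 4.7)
The plan is to argue by contradiction and by induction on the length of the semipath. Suppose, for the sake of contradiction, that there exists a semipath $(X=X_0,X_1,\dotsc,X_n=Y)$ with $X\in\cat[U]$ indecomposable and $Y\in\cat[U]^\perp$ indecomposable. I will prove by induction on $i$ that $X_i\in\cat[U]$ for every $i$; applied to $i=n$ this forces $Y\in\cat[U]\cap\cat[U]^\perp=0$, contradicting the fact that $Y$ is a nonzero indecomposable.

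The base case $X_0=X\in\cat[U]$ is the hypothesis. For the inductive step, assume $X_i\in\cat[U]$ and examine the two possible transitions from $X_i$ to $X_{i+1}$ allowed by the definition of a semipath. If a jump occurs, that is, $X_{i+1}=X_i[1]$, then $X_{i+1}\in\cat[U]$ because $\cat[U]$ is suspended, hence closed under positive shifts. Otherwise $\Hom_{\cat[K]}(X_i,X_{i+1})\ne 0$. Since $X_{i+1}$ is indecomposable and the $t$-structure $\torsion U{U^\perp[1]}$ is split, $X_{i+1}$ lies either in $\cat[U]$ or in $\cat[U]^\perp$; but the latter option would give $\Hom_{\cat[K]}(X_i,X_{i+1})=0$ by the defining axiom $\Hom_{\cat[K]}(\cat[U],\cat[V][-1])=0$ applied to $\cat[V]=\cat[U]^\perp[1]$, and this contradicts the assumption. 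Therefore $X_{i+1}\in\cat[U]$ in either case, completing the induction.

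I do not anticipate any serious obstacle: the statement is essentially a formal consequence of the axioms combined with the splitting hypothesis. The only point that requires some care is ensuring that splitness is invoked correctly in the non-jump case, so that the dichotomy ``$X_{i+1}\in\cat[U]$ or not'' can be sharpened to ``$X_{i+1}\in\cat[U]$ or $X_{i+1}\in\cat[U]^\perp$''; once this is done, the Hom-vanishing axiom of the $t$-structure closes the induction at once.
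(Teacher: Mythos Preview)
Your proof is correct and in fact streamlines the paper's argument. The paper proceeds in two stages: first it reduces to the case of a semipath without jumps by a minimality trick (taking a semipath from $\cat[U]$ to $\cat[U]^\perp$ with the fewest jumps, then shifting the tail past the first jump by $[-1]$ to produce one with fewer jumps, using that $\cat[U]^\perp$ is closed under negative shifts), and only afterwards runs the induction you describe in the jump-free case. You bypass this reduction entirely by observing that $\cat[U]$, being suspended, is closed under $[1]$, so a jump step $X_{i+1}=X_i[1]$ preserves membership in $\cat[U]$ directly. This makes the induction go through uniformly for both kinds of steps and is the more economical route; the paper's detour buys nothing extra here.
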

   \begin{proof}
      Assume that $\cat[K]$ contains a semipath $(X=X_0,X_1,\dotsc,X_n=Y)$ from $X\in\cat[U]$ to $Y\in\cat[U]^\perp$.
      We first claim that this semipath contains no jumps.
      For, assume this is the case. Then $\cat[K]$ contains a semipath $(Y_0,Y_1,\dotsc,Y_m)$ from $Y_0\in\cat[U]$ to $Y_m\in\cat[U]^\perp$
      containing a minimal number of jumps.
      Assume that the first jump occurs at $i$, so that $Y_{i+1}=Y_i[1]$.
      The semipath $(Y_0,Y_1,\dotsc,Y_{i-1},Y_i=Y_{i+1}[-1],Y_{i+2}[-1],\dotsc,Y_m[-1])$ contains one jump less than $(Y_0,Y_1,\dotsc,Y_m)$.
      Because $Y_m\in\cat[U]^\perp$, we have $Y_m[-1]\in\cat[U]^\perp$ as well.
      On the other hand, $Y_0\in\cat[U]$.
      We thus get a contradiction to our minimality hypothesis.
      This establishes our claim.

      Thus, our semipath $(X=X_0,X_1,\dotsc,X_n=Y)$ is a path and $\Hom_{\cat[K]}(X_i,X_{i+1})\ne 0$ for all $i$.
      Because $\Hom_{\cat[K]}(X,X_1)\ne 0$ and $X\in\cat[U]$, we get $X_1\notin\cat[U]^\perp$.
      The $t$-structure being split, we get $X_1\in\cat[U]$.
      Inductively, we get $Y=X_n\in\cat[U]$.
      But then $Y\in\cat[U]\cap\cat[U]^\perp=0$, a contradiction.
   \end{proof}

   We now prove the main result of this section.
   \begin{thm}
      Let $\cat[K]$ be a semiconnected Krull-Schmidt triangulated category, and \linebreak[4]$\torsion U{U^\perp[1]}$ be a split $t$-structure in $\cat[K]$ with nonzero heart.
      Then there exists a hereditary category $\cat[H]$ such that $\cat[K]\cong \der^b(\cat[H])$.
   \end{thm}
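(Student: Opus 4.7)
The plan is to invoke the criterion attributed to~\cite{R2} that is recalled just before Lemma~\ref{l:4.2}: for a semiconnected Krull-Schmidt triangulated category $\cat[K]$, being equivalent to $\der^b(\cat[H])$ for some hereditary $\cat[H]$ is equivalent to the existence of an indecomposable object $Y$ in $\cat[K]$ admitting no semipath from $Y[1]$ to $Y$. The whole task therefore reduces to producing such a witness $Y$ out of the data of a split $t$-structure with nonzero heart.

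The natural candidate is furnished by the heart itself. Because $\cat[K]$ is Krull-Schmidt and the heart $\cat[U]\cap\cat[U]^\perp[1]$ is nonzero, I pick an indecomposable object $X$ in it. I then set $Y = X[-1]$: the condition $X\in\cat[U]^\perp[1]$ gives $Y\in\cat[U]^\perp$, while by definition $Y[1]=X\in\cat[U]$, and both $Y$ and $Y[1]$ are nonzero indecomposables.

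Now suppose, for contradiction, that a semipath from $Y[1]$ to $Y$ exists in $\cat[K]$. Since $Y[1]\in\cat[U]$ and $Y\in\cat[U]^\perp$, this would be precisely a semipath from an indecomposable in $\cat[U]$ to an indecomposable in $\cat[U]^\perp$, which Lemma~\ref{l:4.2} forbids. Hence no such semipath exists, and the criterion of~\cite{R2} supplies the required hereditary category $\cat[H]$ with $\cat[K]\cong \der^b(\cat[H])$.

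I do not foresee any real obstacle here: Lemma~\ref{l:4.2} carries all the weight, and the only delicate point is to observe that the heart sits inside $\cat[U]^\perp[1]$ (and not merely inside $\cat[U]$), so that shifting the chosen indecomposable down by one lands it inside $\cat[U]^\perp$. Lemma~\ref{l:4.1} itself is not invoked in the argument, but it clarifies that the hypothesis of a nonzero heart is exactly what prevents $\cat[U]$ from being triangulated, and thereby places $\cat[K]$ in the genuinely nontrivial regime addressed by the theorem.
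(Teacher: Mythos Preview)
Your argument is correct and essentially the same as the paper's. The only cosmetic difference is that the paper takes the heart object $X$ itself as the witness and then shifts a hypothetical semipath $X[1]\to\cdots\to X$ down by $[-1]$ to obtain one from $X\in\cat[U]$ to $X[-1]\in\cat[U]^\perp$, whereas you pre-shift and take $Y=X[-1]$ as the witness; either way the contradiction comes from Lemma~\ref{l:4.2} and Ringel's criterion finishes the job.
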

   \begin{proof}
      Let $X$ be an indecomposable object in the heart.
      We claim that there is no semipath from $X[1]$ to $X$.
      Indeed, assume that such a semipath  $(X[1]=X_0,X_1,\dotsc,X_n=X)$ exists.
      Then there exists another semipath  $(X=X_0[-1],X_1[-1],\dotsc,X_n[-1]=X[-1])$ from $X$ to $X[-1]$.
      Because $X$ lies in the heart, we have $X\in\cat[U]$.
      But also $X\in\cat[U]^\perp[1]$ which implies $X[-1]\in\cat[U]^\perp$ and then lemma~\ref{l:4.2}
      gives a contradiction.
      This proves our claim.
      Invoking Ringel's result as quoted above completes the proof.
   \end{proof}

   \begin{corollary}\label{c:4.4}
      Let $A$ be a finite dimensional connected algebra, and $\torsion U{U^\perp[1]}$ be a split $t$-structure with nontrivial heart
      in $\der^b(\mod A)$.
      Then $A$ is piecewise hereditary.\qed
   \end{corollary}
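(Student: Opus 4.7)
The plan is to reduce the corollary to a direct application of the theorem above, with $\cat[K] = \der^b(\mod A)$. Since $A$ is a finite dimensional algebra, $\der^b(\mod A)$ is automatically a Krull--Schmidt triangulated category, so the only hypothesis that genuinely needs verification is that $\der^b(\mod A)$ is semiconnected in the sense of the previous lemma. Once this is in hand, the theorem produces a triangle equivalence $\der^b(\mod A) \cong \der^b(\cat[H])$ for some hereditary category $\cat[H]$, and by the definition of piecewise hereditary recalled at the beginning of the section this is exactly the required conclusion.

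To verify semiconnectedness I would argue in two steps. First, because $A$ is connected, its ordinary quiver is connected, and this forces $\mod A$ itself to be connected: any two indecomposable modules can be linked by a chain of indecomposables with alternating nonzero morphisms, for instance by routing through the simples. Second, an arbitrary indecomposable $X \in \der^b(\mod A)$ must be linked to an ordinary $A$-module. For this, choose $n$ maximal with $\HH^n(X) \ne 0$ and use the canonical truncation triangle to obtain a nonzero morphism $X \to \HH^n(X)[-n]$; by Krull--Schmidt this restricts to a nonzero morphism $X \to M[-n]$ for some indecomposable summand $M$ of $\HH^n(X)$. A single jump of the form $M = (M[-n])[n]$ then links $M[-n]$ to $M \in \mod A$. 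Concatenating these links with the connectedness of $\mod A$, together with further jumps to move between distinct shift degrees, yields a semiwalk between any two indecomposables of $\der^b(\mod A)$.

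The main (minor) obstacle is the semiconnectedness verification above, which the excerpt elides presumably because it is a standard consequence of the connectedness of $A$; once it is granted, the corollary reduces to a one-line invocation of the theorem, which explains the \qed printed directly after its statement.
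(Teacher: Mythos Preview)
Your proposal is correct and matches the paper's approach: the corollary carries an immediate \qed\ because it is a direct application of the preceding theorem with $\cat[K]=\der^b(\mod A)$, the semiconnectedness of $\der^b(\mod A)$ for connected $A$ being taken as a standard fact, which you have correctly supplied. One terminological nit: in the paper a ``jump'' is specifically a shift by $1$, but since semiwalks allow $X_{i+1}=X_i[s]$ for arbitrary $s\in\ZZ$, your passage from $M[-n]$ to $M$ is a legitimate single semiwalk step regardless.
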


   \section{Tilting and torsion pairs}

   Let $\cat[H]$ be a hereditary category, with tilting object $T$.
   The endomorphism algebra $A=\End_{\cat[H]} T$ is then said to be \dfn{quasitilted}, see~\cite{HRS1}.
   Typical examples of quasitilted algebras are the tilted algebras, see~\cite{ASS} or~\cite{H1},
   and the canonical algebras, see~\cite{R1}.
   The tilting object $T$ induces a torsion pair $\torsion {T(T)}{F(T)}$ in $\cat[H]$ and a split torsion pair $\torsion {X(T)}{Y(T)}$ in $\mod A$
   by $\cat[T](T)=\set{X\in\cat[H]\mid \Ext^1_{\cat[H]}(T,X)=0}$, $\cat[F](T)=\set{Y\in\cat[H]\mid\Hom_{\cat[H]}(T,Y)=0}$
   and $\cat[X](T)=\Ima \Ext^1_{\cat[H]}(T,-)$, $\cat[Y](T)=\Ima\Hom_{\cat[H]}(T,-)$.
   Considering these subcategories as embedded in $\der^b(\cat[H])$, we have $\cat[Y](T)=\cat[T](T)$ and $\cat[X](T)=\cat[F](T)[1]$.
   We first prove that any split torsion pair in $\cat[H]$ induces a split torsion pair in $\mod A$.

   \begin{lemma}
      Let $\cat[H]$ be an hereditary category with tilting object $T$ and $A=\End_{\cat[H]}T$.
      A split torsion pair $\torsion TF$ in $\cat[H]$ induces a split torsion pair $\torsion{T'}{F'}$ in $\mod A$.
   \end{lemma}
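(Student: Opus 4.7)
The plan is to combine proposition~\ref{p:2.4} with the derived equivalence $\der^b(\cat[H])\simeq \der^b(\mod A)$ induced by the tilting object $T$, and then take the trace of the lifted $t$-structure directly in $\mod A$. Concretely, I would lift $\torsion TF$ via $\phi$ to the $t$-structure $(\cat[U]_{\cat[T]},\cat[U]_{\cat[T]}^\perp[1])$ in $\der^b(\cat[H])$, transport it across the equivalence to $\der^b(\mod A)$, and set $\cat[T]':=\cat[U]_{\cat[T]}\cap \mod A$ and $\cat[F]':=\cat[U]_{\cat[T]}^\perp\cap \mod A$, where $\mod A$ embeds in $\der^b(\mod A)$ in the standard way (and, under the equivalence, as $\cat[T](T)\vee\cat[F](T)[1]\subseteq\der^b(\cat[H])$).

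The key observation is that the split hypothesis on $\torsion TF$ already forces the lifted $t$-structure to be split. Indeed, every indecomposable of $\der^b(\cat[H])$ has the form $X[n]$ for some indecomposable $X\in\cat[H]$; the explicit description in proposition~\ref{p:2.4} places it in $\cat[U]_{\cat[T]}$ when $n>0$, in $\cat[U]_{\cat[T]}^\perp$ when $n<0$, and, when $n=0$, in whichever side contains $X$ by the split property of $\torsion TF$. Each indecomposable $A$-module therefore lies in exactly one of $\cat[U]_{\cat[T]}$ or $\cat[U]_{\cat[T]}^\perp$, so by Krull-Schmidt every $Y\in \mod A$ decomposes as $Y=Y_T\oplus Y_F$ with $Y_T\in\cat[T]'$ and $Y_F\in\cat[F]'$. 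The split sequence $0\to Y_T\to Y\to Y_F\to 0$ is the canonical sequence, while the Hom-vanishing $\Hom_A(\cat[T]',\cat[F]')=\Hom_{\der^b(\mod A)}(\cat[U]_{\cat[T]},\cat[U]_{\cat[T]}^\perp)=0$ is part of the $t$-structure axioms. Thus $\torsion{T'}{F'}$ is a torsion pair, and it is split by construction.

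The main obstacle I anticipate is that the trace of a $t$-structure on an abelian subcategory need not in general be a torsion pair: in the non-split case one would need the analogue of corollary~\ref{c:2.3} for $\mod A$, ensuring that $\mod A[\pm 1]$ lie on the correct sides and that the outer terms of the defining triangle sit in $\mod A$. The natural argument for this uses hereditariness of the ambient abelian category, which is unavailable since $\mod A$ is typically not hereditary. The splitness hypothesis circumvents this difficulty entirely, because the canonical sequences are simply direct sum decompositions and no triangle analysis is required; this also clarifies, as previewed in the introduction, why the subsequent bijection between split torsion pairs on the $\cat[H]$- and $A$-sides makes essential use of the splitting condition.
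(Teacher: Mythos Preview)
Your argument is correct and takes a genuinely different route from the paper's. The paper works entirely module-theoretically: it defines $\cat[T]'=(\cat[Y](T)\cap\cat[T])\vee\cat[X](T)$ explicitly and checks by hand that this is closed under quotients and extensions (using that $\torsion{X(T)}{Y(T)}$ is split in $\mod A$ and that $\torsion{T(T)}{F(T)}$ is a torsion pair in $\cat[H]$), then identifies $\cat[F]'=\cat[Y](T)\cap\cat[F]$ and verifies splitting directly. Your approach is more conceptual: lifting to a split $t$-structure, transporting across the derived equivalence, and intersecting with $\mod A$ immediately yields both torsion-pair axioms and splitness, with no case analysis. Under the identification $\mod A=\cat[T](T)\vee\cat[F](T)[1]$ your $\cat[T]'=\cat[U]_{\cat[T]}\cap\mod A$ computes to exactly $(\cat[Y](T)\cap\cat[T])\vee\cat[X](T)$, so the two constructions agree. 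The paper's argument has the virtue of being self-contained in module categories and not invoking the derived equivalence; yours fits more naturally into the lift/trace framework developed in Section~2 and makes transparent why the split hypothesis is the right one, exactly as you note in your final paragraph.
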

   \begin{proof}
      Let $\torsion TF$ be a split torsion pair in $\cat[H]$.
      We claim that $\cat[T]'=\left(\cat[Y](T)\cap\cat[T]\right)\vee\cat[X](T)$ is a torsion class in $\mod A$.

      We first prove that $\cat[T]'$ is closed under quotients.
      Let $X \longrightarrow Y$ be an epimorphism in $\mod A$ with $X\in\cat[T]'$.
      We may assume that $X$ is indecomposable.
      If $X\in\cat[X](T)$, then $Y\in\cat[X](T)$ because $\cat[X](T)$ is a torsion class.
      Therefore, in this case, $Y\in\cat[T]'$.
      Otherwise, $X\in\cat[Y](T)\cap\cat[T]$.
      Because $X\in\cat[T]$, it is an object in $\cat[H]$, hence so is $Y$ and then $Y\in\cat[T]$.
      But also, $X\in\cat[Y](T)\cap\cat[H]=\cat[T](T)$ gives $Y\in\cat[T](T)$, because $\cat[T](T)$ is a torsion class in $\cat[H]$.
      But then $Y\in\cat[Y](T)\cap\cat[T]=\cat[T]'$.

      We next prove that $\cat[T]'$ is closed under extensions.
      Let $0 \longrightarrow X \longrightarrow Y \longrightarrow Z \longrightarrow 0$  be a short exact sequence in $\mod A$, with $X$, $Z\in\cat[T]'$.
      We may assume that both $X$ and $Z$ are indecomposable.
      If $X$ and $Z$ both belong to $\cat[X](T)$ or both belong to $\cat[Y](T)\cap\cat[T]$,
      then so does $Y$ because each of these classes is closed under extensions.
      Because $\torsion {X(T)}{Y(T)}$ is split, the only case to consider is when $X\in\cat[Y](T)\cap\cat[T]$ and $Z\in\cat[X](T)$.
      Using again that $\torsion {X(T)}{Y(T)}$ is split we have $Y=Y'\oplus Y''$ with $Y'\in\cat[X](T)$, $Y''\in\cat[Y](T)$.
      It suffices to prove that $Y''\in\cat[T]$.
      Now $Y''\in\cat[Y](T)$ implies $Y''\in\cat[H]$.
      Then, either the short exact sequence above splits, and we are done, or else there exists a nonzero morphism $X \longrightarrow b{Y''}$ in $\cat[H]$.
      Because $X\in\cat[T]$, no indecomposable summand of $Y''$ belongs to $\cat[F]$.
      But $\torsion TF$ splits in $\mod A$, therefore $Y''\in\cat[T]$.
      This establishes our claim.

      Let $\cat[F]'=\cat[T']^\perp$.
      In order to prove that $\torsion {T'}{F'}$ is split, it suffices to prove that $\cat[F]'=\cat[Y](T)\setminus\cat[T]=\cat[Y](T)\cap\cat[F]$.
      Assume that $X\in\cat[Y](T)\setminus\cat[T]$, we claim that $\Hom_A(-,X)\big|_{\cat[T]'}=0$.
      Indeed, $X\in\cat[F]$ implies that $\Hom_A(-,X)\big|_{\cat[T]}=0$, hence
      $\Hom_A(-,X)\big|_{\cat[T]\cap\cat[Y](T)}=0$.
      But also $X\in\cat[Y](T)$ implies $\Hom_A(-,X)\big|_{\cat[X](T)}=0$.
      Therefore $\Hom_A(-,X)\big|_{\cat[T]'}=0$, as required.
      Conversely, let $X\in\cat[F]'$ be indecomposable.
      Then $X\notin\cat[T]'$.
      In particular, $X\notin\cat[X](T)$.
      Therefore $X\in\cat[Y](T)$ because $\torsion {X(T)}{Y(T)}$ is split.
      But then $X\notin\cat[T]'$ also implies that $X\notin\cat[T]$.
      Therefore $X\in\cat[Y](T)\setminus\cat[T]$.
      The proof is now complete.
   \end{proof}

   Observe that nontrivial torsion classes in $\cat[H]$ map to nontrivial torsion classes in $\mod A$. Indeed, $\cat[T]\cap\cat[T](T)\ne 0$ above implies $\cat[T]'\cap\cat[Y](T)\ne 0$.

   Let $H$ be a hereditary algebra.
   We recall that an algebra $A$ is \dfn{tilted of type $H$} if there exists a tilting $H$-module $T$ such that $A=\End T_H$, see~\cite{ASS}.
   We denote by $\cat[P]_A$, $\cat[I]_A$ respectively the postprojective and the preinjective components of the Auslander-Reiten quiver $\Gamma(\mod A)$,
   and by $\cat[P]_H$, $\cat[I]_H$ those of $\Gamma(\mod H)$.

   \begin{proposition}\label{p:5.2}
      Let $A$ be a representation-infinite tilted algebra of type $H$ having a complete slice in the preinjective component.
      Then there exists a bijective correspondence between the class of split torsion pairs $\torsion TF$ in $\mod A$ such that $\cat[P]_A\sset\cat[F]$, $\cat[I]_A\sset\cat[T]$ and the class of split torsion pairs $\torsion{T'}{F'}$ in $\mod H$ such that  $\cat[P]_H\sset\cat[F]'$, $\cat[I]_H\sset\cat[T]'$.
   \end{proposition}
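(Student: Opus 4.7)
The plan is to construct the bijection explicitly via the tilting functors $\Hom_H(T,-)$ and $\Ext^1_H(T,-)$, exploiting the split hypothesis to control the embedding of $\mod A$ inside $\der^b(\mod H)$. Since the complete slice of $A$ lies in $\cat[I]_A$, the tilting $H$-module $T$ may be taken to be preinjective, so $\cat[P]_H\subseteq\cat[F](T)$. Using the standard realisation $\mod A=\cat[Y](T)\vee\cat[X](T)=\cat[T](T)\vee\cat[F](T)[1]\subseteq\der^b(\mod H)$, I would identify (following the standard structure theory for tilted algebras) the preinjective component $\cat[I]_A$ with the subcategory of $\cat[Y](T)$ arising from preinjective $H$-modules in $\cat[T](T)$, and the postprojective component $\cat[P]_A$ with the subcategory of $\cat[X](T)$ arising from $\cat[P]_H[1]$.

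Given a split torsion pair $(\cat[T]',\cat[F]')$ in $\mod H$ with $\cat[I]_H\subseteq\cat[T]'$ and $\cat[P]_H\subseteq\cat[F]'$, the forward map assigns an indecomposable $A$-module $X$ to $\cat[T]$ or $\cat[F]$ according to whether its $H$-side counterpart, under the appropriate tilting functor, lies in $\cat[T]'$ or $\cat[F]'$: if $X\in\cat[Y](T)$ corresponds to $M\in\cat[T](T)$ via $\Hom_H(T,-)$, then $X\in\cat[T]$ iff $M\in\cat[T]'$; if $X\in\cat[X](T)$ corresponds to $N\in\cat[F](T)$ via $\Ext^1_H(T,-)$, then $X\in\cat[T]$ iff $N\in\cat[T]'$. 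The split conditions on both $(\cat[T]',\cat[F]')$ in $\mod H$ and $(\cat[X](T),\cat[Y](T))$ in $\mod A$ make this assignment unambiguous indecomposable-by-indecomposable, and the inclusions $\cat[I]_A\subseteq\cat[T]$ and $\cat[P]_A\subseteq\cat[F]$ follow immediately from the identifications above combined with $\cat[I]_H\subseteq\cat[T]'$ and $\cat[P]_H\subseteq\cat[F]'$.

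The central technical step is verifying that $(\cat[T],\cat[F])$ is a torsion pair in $\mod A$. Closure and the splitting property are formal. The orthogonality $\Hom_A(\cat[T],\cat[F])=0$ breaks into four cases according to the $(\cat[X](T),\cat[Y](T))$-decomposition: the within-$\cat[Y](T)$ and within-$\cat[X](T)$ cases reduce, via the tilting equivalences, to $\Hom_H(\cat[T]',\cat[F]')=0$; the case from $\cat[X](T)\cap\cat[T]'$ to $\cat[Y](T)\cap\cat[F]'$ is automatic from the torsion pair $(\cat[X](T),\cat[Y](T))$. The remaining delicate case, from $\cat[Y](T)\cap\cat[T]'$ to $\cat[X](T)\cap\cat[F]'$, translates under the derived equivalence into an $\Ext^1_H$-vanishing between certain subcategories of $\mod H$. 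This is where the split form of $(\cat[T]',\cat[F]')$ --- which forces $\cat[T]'=\cat[I]_H\cup\cat[R]'$ and $\cat[F]'=\cat[P]_H\cup\cat[R]''$ for a partition $\cat[R]=\cat[R]'\cup\cat[R]''$ of the regular modules --- combined with the preinjectivity of $T$ becomes essential, controlling the relevant $\Ext^1$-groups by a case analysis on postprojective, regular and preinjective parts. Canonical sequences in $\mod A$ are then assembled from those of $(\cat[T]',\cat[F]')$ in $\mod H$ combined with the canonical $(\cat[X](T),\cat[Y](T))$-decomposition.

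The reverse map pulls a split torsion pair $(\cat[T],\cat[F])$ in $\mod A$ satisfying the preinjective/postprojective conditions back to $\mod H$ by the same indecomposable-wise recipe, and mutual inverseness follows by unpacking the definitions. The main obstacle is the $\Ext^1_H$-vanishing in the delicate case above; the split hypothesis on both torsion pairs is precisely what reduces this to a tractable computation, as anticipated in the remark following the main theorem's statement.
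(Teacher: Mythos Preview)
Your setup contains an orientation error that breaks the construction. When $A$ has its complete slice in the \emph{preinjective} component, the tilting module $T$ satisfies $\cat[F](T)\subseteq\add\cat[P]_H$ (all regular and preinjective $H$-modules are $T$-torsion), not $\cat[P]_H\subseteq\cat[F](T)$ as you assume; the latter would place the connecting component, and hence the slice, in $\cat[P]_A$. Under the correct picture one has $\cat[P]_A=\Hom_H(T,\cat[P]_H\cap\cat[T](T))\subseteq\cat[Y](T)$, while $\cat[X](T)=\cat[F](T)[1]$ lies entirely inside $\cat[I]_A$. Your identifications of $\cat[P]_A$ and $\cat[I]_A$ are thus inverted.

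This makes your indecomposable-by-indecomposable rule fail outright: since $\cat[F](T)\subseteq\cat[P]_H\subseteq\cat[F]'$, every $N\in\cat[F](T)$ lies in $\cat[F]'$, so your recipe assigns all of $\cat[X](T)$ to $\cat[F]$, contradicting $\cat[X](T)\subseteq\cat[I]_A\subseteq\cat[T]$. In the correct picture the whole of $\cat[X](T)$ must go to $\cat[T]$, and the only genuine splitting occurs within $\cat[Y](T)$. Consequently your ``delicate case'' $\Hom_A(\cat[Y](T)\cap\cat[T],\,\cat[X](T)\cap\cat[F])$ is vacuous, and the anticipated $\Ext^1_H$-vanishing plays no role.

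The paper proceeds differently and more cleanly: it sets $\cat[T]'=\Ima(\cat[T]\otimes_A T)$ in one direction and $\cat[F]=\Hom_H(T,\cat[F]')$ in the other, and then shows that each resulting class is closed under successors (respectively predecessors), which for a split pair is all that is required to be a torsion (respectively torsion-free) class. Mutual inverseness follows from the tilting adjunction. This bypasses the four-case orthogonality analysis entirely.
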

   \begin{proof}
      There exists a tilting module $T$ such that $A=\End T$.
      The correspondence between $\mod A$ and $\mod H$ induced by the
      tilting functors is summarised in the following picture (see~\cite{ASS}).


\begin{figure}[h!]\hspace{-8cm}

 \includegraphics[scale=0.8]{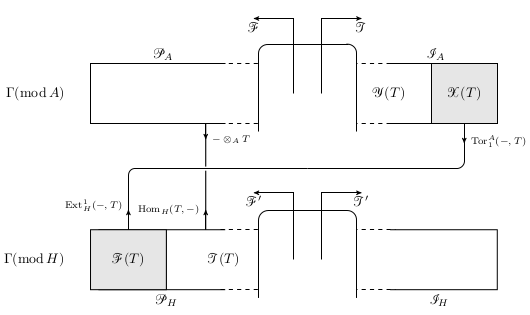}


\end{figure}

      Note that, while $\torsion {X(T)}{Y(T)}$ is split in $\mod A$, $\torsion {T(T)}{F(T)}$ is usually not split in $\mod H$.
      The proof is done in three steps.
      
          We start by defining a map $\zeta$ from the set of split torsion classes $\cat[T]$ in $\mod A$ with $\cat[I]_A\sset\cat[T]$, $\cat[P]_A\sset\cat[T]^\perp=\cat[F]$ to the set of split torsion classes $\cat[T]'$ in $\mod H$ with $\cat[I]_H\sset\cat[T]'$, $\cat[P]_A\sset\cat[T]'^\perp=\cat[F]'$.

         Let $\cat[T]$ be a split torsion class in $\mod A$ and let
         \[\cat[T]'=\Ima(\cat[T]\otimes_A T)=\set{M\otimes_A T\mid M\in\cat[T]} \]
         in $\mod H$. Thus $\cat[T]'$ is actually contained inside $\cat[T](T)$.

         We claim that, in fact, $\cat[T]'=\Ima\left((\cat[Y](T)\cap\cat[T])\otimes_AT\right)$.
         Indeed, let $M\in\cat[T]$ and consider its canonical sequence in the torsion pair $\torsion {X(T)}{Y(T)}$
         $$0 \longrightarrow {M_{\cat[X]}} \longrightarrow M \longrightarrow {M_{\cat[Y]}} \longrightarrow 0$$ 
         with $M_{\cat[X]}\in\cat[X](T)$, $M_{\cat[Y]}\in\cat[Y](T)$.
         Applying $-\otimes_A T$, we get $M\otimes_AT\cong M_{\cat[Y]}\otimes_A T$,
         because $M_{\cat[X]}\otimes_AT=0$.
         Moreover, $M\in\cat[T]$ implies $M_{\cat[Y]}\in\cat[T]$ hence $M_{\cat[Y]}\in\cat[Y](T)\cap\cat[T]$.
         This establishes our claim

         We next claim that $\cat[P]_H\cap\cat[T]'=0$.
         Indeed, let $X\in\cat[P]_H\cap\cat[T]'$ be indecomposable.
         Because $X\in\cat[T]'$, there exists $M\in\cat[Y](T)\cap \cat[T]$ such that $X\cong M\otimes_A T$.
         Because $M\in\cat[Y](T)$, we have $\Hom_H(T,X)\cong\Hom_H(T,M\otimes_A T)\cong M\in\cat[T]$.
         On the other hand, $X\in\cat[P]_H$ implies $\Hom_H(T,X)\in\cat[P]_A$.
         This contradicts the fact that $\cat[P]_A\sset\cat[F]$ by hypothesis.
         Our claim is proved.

         We now prove that $\cat[T]'$ is a split torsion class by proving that it is closed under successors.
         Assume we have a nonzero morphism $ X \longrightarrow Y$ with $X$, $Y$ indecomposable and $X\in\cat[T]'$.
         We first show that, under these hypotheses, $Y\in\cat[T](T)$.
         Consider the canonical sequence of $Y$ in the torsion pair $\torsion {T(T)}{F(T)}$
         \[0 \longrightarrow {Y_{\cat[T]}}\longrightarrow Y \longrightarrow {Y_{\cat[F]}} \longrightarrow 0 \]
         with $Y_{\cat[T]}\in\cat[T](T)$, $Y_{\cat[F]}\in\cat[F](T)$.
         Assume $Y_{\cat[F]}\ne 0$.
         Because $Y_{\cat[F]}\in\cat[F](T)\sset\add\cat[P]_H$, every indecomposable summand of $Y_{\cat[F]}$ lies in $\cat[P]_H$.
         Because $\cat[P]_H$ is closed under predecessors, $Y$ and also $X$ are in $\cat[P]_H$.
         But this contradicts the facts that $X\in\cat[T]'$ and $\cat[P]_H\cap\cat[T]'=0$.
         Therefore $Y_{\cat[F]}=0$ and $Y=Y_{\cat[T]}\in\cat[T](T)$, as required.
         Hence $\Hom_H(T,Y)\in\cat[Y](T)$.
         Because $X\in\cat[T]'\sset\cat[T](T)$, the tilting theorem asserts the existence of a nonzero morphism ${\Hom_H(T,X)} \longrightarrow {\Hom_H(T,Y)}$ in $\mod A$.
         Now $\Hom_H(T,X)\in\cat[T]$: indeed, $X\in\cat[T]'$ says that there exists $M\in\cat[Y](T)\cap\cat[T]$ such that $X\cong M\otimes_AT$.
         Therefore $\Hom_H(T,X)\cong\Hom_H(T,M\otimes_AT)\cong M\in\cat[T]$, where we have used that $M\in\cat[Y](T)$.
         Because $\cat[T]$ is closed under successors, we have $\Hom_H(T,Y)\in\cat[T]$.
         Because $Y\in\cat[T](T)$, we have $Y\cong\Hom_H(T,Y)\otimes_A T\in\cat[T]'$.

         Let $\cat[F]'=\cat[T]'^\perp$. Then $\torsion {T'}{F'}$ is a split torsion pair in $\mod H$.
         Moreover $\cat[T]'\cap\cat[P]_H=0$ implies $\cat[P]_H\sset\cat[F]'$ and also $\cat[I]_H=\left(\cat[I]_A\cap\cat[Y](T)\right)\otimes_AT\sset\cat[T]'$,
         because $\cat[I]_A\sset\cat[T]$.

         For future use, we characterise the modules in $\cat[F]'$.
         We have $X\in\cat[F]'$ if and only if $\Hom_H(-,X)\big|_{\cat[T]'}=0$, that is, $\Hom_H(L\otimes _AT,X)=0$ for all $L\in\cat[T]$ or, equivalently, $\Hom_A\left(L,\Hom_H(T,X)\right)=0$ for all $L\in\cat[T]$.
         Thus $X\in\cat[F]'$ if and only if $\Hom_H(T,X)\in\cat[F]$.

         This completes the definition of the map 
         $\zeta:\cat[T]\longrightarrow {\cat[T]'}$

         We next define a map $\chi$ from the set of split torsion classes $\cat[T]'$ in $\mod H$ with $\cat[I]_H\sset\cat[T]'$, $\cat[P]_H\sset\cat[T]'^\perp=\cat[F]'$ to the set of split torsion classes $\cat[T]$ in $\mod A$ with $\cat[I]_A\sset\cat[T]$, $\cat[P]_A\sset\cat[T]^\perp=\cat[F]$.

         Let $\cat[T]'$ be a split torsion class in $\mod H$ and $\cat[F]'=\cat[T]'^\perp$.
         Let \[\cat[F]=\Hom_H(T,\cat[F]')=\set{\Hom_H(T,X)\mid X\in\cat[F]'}.\]
         As in 1. above, it is easy to see that, in fact, $\cat[F]=\Hom(T,\cat[F]'\cap\cat[T](T))\sset\cat[Y](T)$.

         We claim that $\cat[I]_A\cap\cat[F]=0$.
         Indeed, assume $M\in\cat[X](T)$, then $M\notin\cat[Y](T)$ hence $M\notin\cat[F]$.
         Otherwise, $M\in\cat[Y](T)\cap\cat[I]_A$ implies that $M\otimes_AT\in\cat[T](T)\cap\cat[I]_H\sset\cat[T]'$.
         If $M\in\cat[F]$, then there exists $X\in\cat[F]'\cap\cat[T](T)$ such that $M\cong\Hom_H(T,X)$.
         But then, $X\in\cat[T](T)$ yields $M\otimes_AT\cong\Hom_H(T,X)\otimes_A T\cong X\in\cat[F]'$, a contradiction. Therefore $M\notin\cat[T]$, establishing our claim.

         We prove that $\cat[F]$ is a split torsion class by proving it is closed under predecessors.
         Assume we have a nonzero morphism $\longrightarrow L->M$, with $L$, $M$ indecomposable and $M\in\cat[F]$.
         Because of our claim above, $M\notin\cat[I]_A$.
         Hence, $M\in\cat[Y](T)$.
         Because $\cat[Y](T)$ is closed under predecessors, $L\in\cat[Y](T)$ so $L\otimes_AT\in\cat[T](T)$.
         The tilting theorem yields a nonzero morphism $ {L\otimes_AT} \longrightarrow {M\otimes_AT}$.
         Because $M\in\cat[F]$, there exists $X\in\cat[F]'\cap\cat[T](T)$ such that $M\cong\Hom_H(T,X)$.
         Because $X\in\cat[T](T)$, we have $M\otimes_AT\cong\Hom_H(T,X)\otimes_AT\cong X\in\cat[F]'$.
         Because $\cat[F]'$ is closed under predecessors, $L\otimes_AT\in\cat[F]'$.
         Then $L\in\cat[Y](T)$ yields $L\cong\Hom_H(T,L\otimes_AT)\in\cat[F]$. We are done.

         Letting $\cat[T]={^\perp}\cat[F]$, we get a split torsion pair $\torsion TF$ in $\mod A$.
         Also, $\cat[I]_A\cap\cat[F]=0$ yields $\cat[I]_A\sset\cat[T]$, and $\cat[P]_A=\Hom_H(T,\cat[P]_H\cap\cat[T](T))\sset\cat[F]$, because $\cat[P]_H\sset\cat[F]'$.

         We now characterise the modules in $\cat[T]$.
         We have $L\in\cat[T]$ if and only if \linebreak[4]$\Hom_A(L,-)\big|_{\cat[F]}=0$, that is, if and only if $\Hom_A\left(L,\Hom_H(T,X)\right)=0$ for all $X\in\cat[F]'$ or, equivalently, $\Hom_H(L\otimes_AT,X)=0$ for all $X\in\cat[F]'$.
         Thus, $L\in\cat[T]$ if and only if $L\otimes_AT\in\cat[T]'$.

         This completes the definition of the map 
         ${\chi:\cat[T]'}\longrightarrow \cat[T]$

         \item Finally, we prove that $\zeta$ and $\chi$ are inverse to each other.
         We first show that $\chi\circ\zeta=\id$.
         Let $\cat[T]$ be a split torsion class in $\mod A$ such that $\cat[I]_A\sset\cat[T]$, $\cat[P]_A\sset\cat[T]^\perp$.
         Let $L\in\cat[T]$, then $L\otimes_AT\in\Ima(\cat[T]\otimes_AT)=\zeta(\cat[T])$.
         Therefore $\cat[T]\sset\chi\zeta(\cat[T])$.

         Conversely, let $L\in\chi\zeta(\cat[T])$.
         Then $L\otimes_AT\in\zeta(\cat[T])$ and there exists $L'\in\cat[T]\cap\cat[Y](T)$ such that $L\otimes_AT\cong L'\otimes_AT$.
         Denoting by $\delta_L$ the unit of the $\otimes-\Hom-$adjunction, we have $\delta_L: L \longrightarrow {\Hom_H(T,L\otimes_AT)}\cong \Hom_H(T,L'\otimes_AT)\cong L'$ because $L'\in\cat[Y](T)$.
         Now $\cat[Y](T)$ is closed under successors, hence $L\in\cat[Y](T)$ and so $\delta_L$ is an isomorphism.
         Thus $L\cong L'\in\cat[T]$.
         Therefore $\chi\zeta(\cat[T])\sset\cat[T]$ and we have proven that $\chi\circ\zeta=\id$.

         In order to prove that $\zeta\circ\chi=\id$, let $\cat[T]'$ be a split torsion class in $\mod H$ such that $\cat[I]_H\sset\cat[T]'$, $\cat[P]_H\sset\cat[T]'^\perp$.
         Let $X\in\zeta\chi(\cat[T]')$.
         Then there exists $L\in\chi(\cat[T]')$ such that $X\cong L\otimes_A T$.
         But $L\in\chi(\cat[T]')$ implies $L\otimes_AT\in\cat[T]'$.
         Therefore $X\in\cat[T]'$ and so $\zeta\chi(\cat[T]')\sset\cat[T]'$.

         Conversely, let $X\in\cat[T]'$. Because $\cat[T]'\sset\cat[T](T)$, there exists $L\in\cat[Y](T)$ such that $X\cong L\otimes_AT$.
         Because $L\otimes_AT\in\cat[T]'$, we have $L\in\chi(\cat[T]')$ so $L\in\chi(\cat[T]')\cap\cat[Y](T)$ and then $X\in\Ima\left((\chi(\cat[T]')\cap\cat[Y](T)\otimes_AT\right)=\zeta\chi(\cat[T]')$.
         Thus $\cat[T]'\sset\zeta\chi(\cat[T]')$ and so $\zeta\circ\chi=\id$. \qedhere
      
   \end{proof}

This leads us to our main result of this section.

\begin{thm}\label{t:5.3}
   Let $A$ be a representation-infinite tilted algebra of type $H$ having a complete slice in the preinjective component.
   Then there are bijective correspondences between the following three classes:
   \begin{itemize}
      \item [(a)] Split torsion pairs $\torsion TF$ in $\mod A$ such that $\cat[I]_A\sset\cat[T]$, $\cat[P]_A\sset\cat[F]$.
          
      \item [(b)] Split torsion pairs $\torsion {T'}{F'}$ in $\mod H$ such that $\cat[I]_H\sset\cat[T]'$, $\cat[P]_H\sset\cat[F]'$.
          
      \item [(c)] Split $t$-structures $\torsion U{U^\perp[1]}$ in $\der^b(\mod H)$ such that $\cat_1\sset\cat[U]\cap\cat[U]^\perp[1]$.
   \end{itemize}
\end{thm}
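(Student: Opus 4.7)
The statement is essentially a juxtaposition of two previously established bijections, so my plan is to identify each of them carefully and then verify that the splitting conditions match across all three formulations.

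The bijection between (a) and (b) is exactly Proposition~\ref{p:5.2}: the maps $\zeta$ and $\chi$ constructed there via the tilting functors $-\otimes_A T$ and $\Hom_H(T,-)$ send a split torsion pair on the tilted side with $\cat[I]_A\sset\cat[T]$, $\cat[P]_A\sset\cat[F]$ to a split torsion pair on the hereditary side with $\cat[I]_H\sset\cat[T]'$, $\cat[P]_H\sset\cat[F]'$, and are mutually inverse. So for the (a)$\leftrightarrow$(b) half I simply quote Proposition~\ref{p:5.2}.

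For the bijection between (b) and (c), the natural candidate is the restriction of the lift map $\phi$ and trace map $\psi$ from Section~2. Theorem~\ref{t:3.3} already gives a bijection between torsion pairs $\torsion TF$ in $\mod H$ with $\cat[I]_H\sset\cat[T]$, $\cat[P]_H\sset\cat[F]$ and $t$-structures $\torsion U{U^\perp[1]}$ in $\der^b(\mod H)$ with $\cat_1\sset\cat[U]\cap\cat[U]^\perp[1]$, so the remaining point is that these restrict further to splits on both sides. This is exactly the content of the remark immediately following Theorem~\ref{t:3.3}, and I would simply invoke it: because the lift map sends a torsion pair $\torsion TF$ to $\torsion{U_{\cat[T]}}{U^\perp_{\cat[T]}[1]}$ with $\cat[U]_{\cat[T]}=\cat[T]\vee\bigl(\bigvee_{j>0}\cat[H][j]\bigr)$ and $\cat[U]^\perp_{\cat[T]}[1]=\bigl(\bigvee_{j<0}\cat[H][j]\bigr)[1]\vee\cat[F][1]$, the indecomposables of $\der^b(\mod H)$ split as torsion$/$torsion-free precisely when the indecomposables of $\mod H$ do, under the identifications encoded in Section~3.

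Putting the two pieces together, I would conclude by composing $\zeta$ with the lift map $\phi$ (and $\psi$ with $\chi$) to obtain the bijection (a)$\leftrightarrow$(c), and record that all three resulting correspondences agree under composition. No real obstacle is expected, since the heavy lifting has already been done in Proposition~\ref{p:5.2} and Theorem~\ref{t:3.3}; the only minor point requiring care is checking that the hypotheses $\cat[I]_H\sset\cat[T]'$, $\cat[P]_H\sset\cat[F]'$ appearing in (b) are exactly what Theorem~\ref{t:3.3} requires in order to apply, and that the remark on restricting to splits indeed covers the class in (c).
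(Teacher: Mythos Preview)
Your proposal is correct and matches the paper's own proof essentially verbatim: the paper simply writes ``We combine proposition~\ref{p:5.2}, theorem~\ref{t:3.3} and the remark just following it,'' which is exactly the decomposition you give. The only (harmless) extra work in your write-up is the explicit verification that lift and trace preserve splitting, which the paper delegates to that one-line remark.
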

\begin{proof}
   We combine proposition~\ref{p:5.2}, theorem~\ref{t:3.3} and the remark just following it.
\end{proof}

We shall give a precise description of the $t$-structures considered above in section~\ref{sec:6} below.
Note that, if $A$ is a representation-infinite tilted algebra of euclidean type, then, up to duality, we may assume that it has a complete slice in the preinjective component.
The above theorem then applies.

\section{Split $t$-structures}\label{sec:6}

The objective of this final section is to give a complete description of the split $t$-structures in $\der^b(\mod H)$ when $H$ is an hereditary algebra.
We start by considering the case where the aisle of the $t$-structure admits an indecomposable Ext-projective object.
For the notion of presection, we refer the reader to~\cite{ABS}.

\begin{lemma}\label{l:6.1}
   Let $Q$ be a quiver and $\torsion U{U^\perp[1]}$ a split $t$-structure in $\der^b(\mkQ)$. If a component $\Gamma$ of $\Gamma(\der^b(\mkQ))$ contains an indecomposable Ext-projective in $\cat[U]$, then:
   \begin{itemize}
      \item [(a)]$\Gamma$ is a transjective component in $\Gamma(\der^b(\mkQ))$.
      
      \item [(b)] The indecomposable Ext-projectives in $\cat[U]$ form a section in $\Gamma$.
      
      \item [(c)] There are no indecomposable Ext-projectives in $\cat[U]$ in the other components of \linebreak[4] $\Gamma(\der^b(\mkQ))$.
   \end{itemize}
\end{lemma}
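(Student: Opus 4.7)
The plan is to let $X$ be an indecomposable Ext-projective of $\cat[U]$ lying in $\Gamma$, so that by the Serre-duality criterion recalled in the preliminaries we have $X\in\cat[U]$ together with $\tau X\in\cat[U]^\perp$. I then combine this with lemma~\ref{l:4.2}---no semipath from $\cat[U]$ to $\cat[U]^\perp$---and with the structure of $\Gamma(\der^b(\mkQ))$: the transjective components $\cat_j$ and regular components $\cat[R]_j$, with morphisms $\cat_j\to\cat_{j+1}$ factoring through $\add\cat[R]_j$.

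For (a), suppose for contradiction that $\Gamma$ is regular. When $\Gamma$ is a tube of rank $p$, the AR-arrows inside $\Gamma$ give a sequence
\[
X \to \cdots \to \tau^{-1}X \to \cdots \to \tau^{-(p-1)}X = \tau X
\]
obtained by following the $\tau^{-1}$-direction once around the tube; this is in particular a semipath from $X\in\cat[U]$ to $\tau X\in\cat[U]^\perp$, contradicting lemma~\ref{l:4.2}. When $\Gamma\cong\ZZ A_\infty$ (the wild case), no such path exists inside $\Gamma$, so I would argue instead via the AR-triangle $\tau X \to E \to X \to \tau X[1]$ in $\der^b(\mkQ)$: its connecting morphism is non-zero, so the $t$-structure axiom $\Hom(\cat[U],\cat[U]^\perp)=0$ combined with $X\in\cat[U]$ forces $\tau X[1]\in\cat[U]$. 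Iterating lemma~\ref{l:4.2} along the inner paths $\tau^{n}X \to E_n \to \tau^{n-1}X$ gives $\tau^n X\in\cat[U]^\perp$ for every $n\ge 1$; playing these inclusions off against the suspendedness of $\cat[U]$ and the Serre functor $\nu=\tau[1]$ should contradict the fact that the right cone of $X$ in $\Gamma$ sits inside $\cat[U]$ while the left cone of $\tau X$ sits inside $\cat[U]^\perp$.

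For (c), suppose another component $\Gamma'\ne\Gamma$ contains an indecomposable Ext-projective $X'\in\cat[U]$. By (a) both $\Gamma=\cat_j$ and $\Gamma'=\cat_{j'}$ are transjective, and up to swapping one may assume $j<j'$. The factorisation of morphisms between successive transjective components through the intermediate regular ones produces, by composition, a semipath from $X\in\cat[U]$ to $\tau X'\in\cat[U]^\perp$, again violating lemma~\ref{l:4.2}; hence $\Gamma=\Gamma'$. For (b), write $\Gamma\cong\ZZ\Delta$ with $\Delta$ Dynkin and let $\Sigma$ be the set of indecomposable Ext-projectives in $\cat[U]\cap\Gamma$. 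Applying lemma~\ref{l:4.2} to the AR-meshes inside each $\tau$-orbit shows that $\cat[U]$ is downward-closed and $\cat[U]^\perp$ upward-closed in the $\tau$-direction, so every orbit contributes at most one Ext-projective. That every orbit is actually met follows because, in $\ZZ\Delta$, the right cone of $X$ meets every $\tau$-orbit (giving a point in $\cat[U]$) while the left cone of $\tau X$ meets every orbit (giving a point in $\cat[U]^\perp$, by lemma~\ref{l:4.2}); each orbit therefore has a unique boundary point, which is its element of $\Sigma$. A standard convexity argument on cones in $\ZZ\Delta$ then identifies $\Sigma$ as a connected, convex subquiver meeting each $\tau$-orbit exactly once, i.e.\ a section.

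The main obstacle is (a) in the wild case: in a $\ZZ A_\infty$ component there is no intrinsic AR-path $X\to\tau X$, so lemma~\ref{l:4.2} does not apply as directly as for tubes, and one must combine the AR-triangle, the closure of $\cat[U]$ under positive shifts, and the Serre-functor structure $\tau[1]$ to squeeze out the contradiction.
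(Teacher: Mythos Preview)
Your argument for (a) in the tube case and your approach to (b) are essentially sound, though in (b) the component is $\ZZ Q$, not $\ZZ\Delta$ with $\Delta$ Dynkin; the cone argument you sketch can be made to work and is close in spirit to the paper's presection argument via~\cite{ABS}.

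There is, however, a genuine gap in (a) for the $\ZZ\AA_\infty$ case. You correctly observe that the left cone of $\tau X$ lies in $\cat[U]^\perp$ and the right cone of $X$ lies in $\cat[U]$, but this is \emph{not} a contradiction: exactly the same configuration occurs in every transjective component, so no amount of ``playing off'' suspendedness and the Serre functor will extract one from these facts alone. What is missing is an honest morphism from $\cat[U]$ to $\cat[U]^\perp$ inside the component. The paper supplies this via Kerner's result \cite{K}(1.3): for regular modules over a wild hereditary algebra there always exists $t\ge 1$ with $\Hom(X,\tau^t X)\ne 0$. Since $\tau^t X$ is a predecessor of $\tau X$, it lies in $\cat[U]^\perp$ by lemma~\ref{l:4.2}, and the nonzero map $X\to\tau^t X$ then contradicts $\Hom(\cat[U],\cat[U]^\perp)=0$. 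This external input is the crux, and your sketch does not reach it.

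Your approach to (c) is different from the paper's and, as written, also incomplete. You assert a semipath from $X\in\cat_j$ to $\tau X'\in\cat_{j'}$, but such a semipath need not exist: semipaths only move forward (via nonzero morphisms or positive shifts), and within a transjective component $\cat_{j'}\cong\ZZ Q$ you can only reach successors, so if $\tau X'$ sits to the left of every shift $X[j'-j]$ you cannot get there. The paper avoids this entirely by a counting argument: part (b) already gives $|Q_0|$ indecomposable Ext-projectives in $\Gamma$, and \cite{ASST}~theorem~2.3 bounds the total number of Ext-projectives in $\cat[U]$ by $\rk K_0(\kk Q)=|Q_0|$, so there is no room for more in other components.
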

\begin{proof}
   Let $E_0\in\cat[U]$ be an indecomposable Ext-projective in $\cat[U]$ lying in $\Gamma$. Then $\tau E_0\in\cat[U]^\perp$.
   \begin{itemize}
      \item [(a)] Assume first that $\Gamma$ is a stable tube.
      Then there exists $s\ge 1$ such that $E_0=\tau^s E_0$.
      Because $s\ge 1$, $\tau^sE_0$ precedes $\tau E_0$ and hence lies in $\cat[U]^\perp$, because of lemma~\ref{l:4.2}.
      But now $E_0\in\cat[U]$, and we have a contradiction.
      If $\Gamma$ is a component of type $\ZZ\AA_\infty$, there exist $t\ge 1$ and a nonzero morphism ${E_0} \longrightarrow {\tau^t E_0}$, see~\cite{K}(1.3).
      Again, $\tau^t E_0$ precedes $\tau E_0$ and hence lies in $\cat[U]^\perp$.
      Then $E_0\in\cat[U]$ yields the same contradiction as before.
      Therefore $E_0$ lies neither in a stable tube, nor in a component of type $\ZZ\AA_\infty$.
      Hence, $\Gamma$ is a transjective component.
      
      \item  [(b)]Because $\Gamma$ is transjective, it is of the form $\ZZ Q$.
      In order to prove that the Ext-projectives constitute a section in $\Gamma$, it suffices to prove that they form a presection, because of~\cite{ABS}~proposition~7.
      Let $ {E_0} \longrightarrow X$ be an arrow in $\Gamma$, with $E_0$ indecomposable Ext-projective in $\cat[U]$.
      Observe that, because $X$ succedes $E_0$, we have $X\in\cat[U]$.
      Assume $X$ is not Ext-projective.
      Then $\tau X\notin\cat[U]^\perp$.
      Because $\torsion U{U^\perp[1]}$ is split, we get $\tau X\in\cat[U]$.
      On the other hand, there is an arrow $ {\tau^2 X} \Longrightarrow {\tau E_0}$ and $\tau E_0\in\cat[U]^\perp$.
      Therefore $\tau^2X\in\cat[U]^\perp$.
      This implies that $\tau X$ is Ext-projective.
      Dually, if $ Y \longrightarrow {E_0}$ is an arrow in $\Gamma$, then either $Y$ or $\tau^{-1}Y$ is Ext-projective in $\cat[U]$. This completes the proof.

      \item [(c)] It follows from (b) that the number of isomorphism classes of indecomposable Ext-projectives in $\cat[U]$ lying in $\Gamma$ equal $|Q_0|=\rk K_0(\kk Q)$.
      Because of~\cite{AST}~theorem~2.3, there are no other Ext-projectives.\qedhere
   \end{itemize}
\end{proof}

\begin{corollary}\label{c:6.2}
   Let $Q$ be a quiver and $\torsion U{U^\perp[1]}$ be a split $t$-structure in $\der^b(\mkQ)$.
   Then the number of isomorphism classes of indecomposable Ext-projectives in $\cat[U]$ is either equal to zero or to $|Q_0|$.\qed
\end{corollary}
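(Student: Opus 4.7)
The plan is to dispatch the statement immediately from lemma~\ref{l:6.1}, doing a simple case split on whether $\cat[U]$ admits any indecomposable Ext-projective at all. If it admits none, the count is zero and there is nothing more to prove, so the entire content lies in the other case.

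Suppose then that $\cat[U]$ contains an indecomposable Ext-projective $E_0$, lying in some component $\Gamma$ of $\Gamma(\der^b(\mkQ))$. By lemma~\ref{l:6.1}(a), $\Gamma$ must be a transjective component; hence $\Gamma$ is of the form $\ZZ Q$. By lemma~\ref{l:6.1}(b), the indecomposable Ext-projectives in $\cat[U]$ lying in $\Gamma$ form a section $\Sigma$ of $\Gamma$. The key combinatorial input I would then invoke is that any section of a translation quiver of the form $\ZZ Q$ has exactly $|Q_0|$ vertices (a section meets each $\tau$-orbit exactly once, and the $\tau$-orbits of $\ZZ Q$ are in bijection with $Q_0$). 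Hence the number of isomorphism classes of indecomposable Ext-projectives in $\cat[U]$ that lie in $\Gamma$ is precisely $|Q_0|$.

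To conclude, I would apply lemma~\ref{l:6.1}(c), which asserts that no other component of $\Gamma(\der^b(\mkQ))$ contains an indecomposable Ext-projective in $\cat[U]$. Therefore the total count across all components equals the count inside $\Gamma$, namely $|Q_0|$, and together with the trivial case this gives the dichotomy claimed.

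The only part that requires any thought is the section-counting step, but it is routine and already implicit in the use of sections throughout the paper (cf.\ the reliance on~\cite{ABS}~proposition~7 in the proof of lemma~\ref{l:6.1}(b)); there is no real obstacle, as lemma~\ref{l:6.1} has packaged everything needed.
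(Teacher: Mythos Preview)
Your proposal is correct and matches the paper's intended argument exactly: the corollary is marked with a bare \qed\ because the counting of $|Q_0|$ Ext-projectives in $\Gamma$ is already carried out verbatim in the proof of lemma~\ref{l:6.1}(c), and parts (a)--(c) together give precisely the case split you describe.
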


We are now able to state and prove our main result of this section, which describes completely the split $t$-structures considered in corollary~\ref{c:3.2} and theorem~\ref{t:5.3}.

\begin{thm}
Let $Q$ be a quiver and $\torsion U{U^\perp[1]}$ be a split $t$-structure in $\der^b(\mkQ)$. Then we have one of the following:
\begin{itemize}
   \item [(a)] If $\cat[U]$ admits at least one indecomposable Ext-projective, then it admits $|Q_0|$, the set of which forms a section in a transjective component $\cat_i$ and then \[\cat[U]=(\Succ\Sigma)\vee\cat[R]_i\vee\left(\bigvee_{j>i}(\cat_j\vee\cat[R]_j)\right).\]
       
   \item [(b)] If $\cat[U]$ has no Ext-projective and $\kk Q$ is tame, then there exist $i\in\ZZ$ and a subset $L\sset\dd P_1(\kk)$ such that
   \[\cat[U]=\left(\bigvee_{\lambda\in L}\cat[T]_\lambda\right)\vee\left(\bigvee_{j>i}(\cat_j\vee\cat[R]_j)\right)\]
   where $\cat[R]_i=\left(\cat[T]_\lambda\right)_{\lambda\in\dd P_1(\kk)}$.
   
   \item [(c)] If $\cat[U]$ has no Ext-projective and $\kk Q$ is wild, then there exists $i$ such that either
      \[ \cat[U]=\bigvee_{j>i}(\cat_j\vee\cat[R]_j)\qquad\text{or}\qquad \cat[U]=\cat[R]_i\vee\left(\bigvee_{j>i}(\cat_j\vee\cat[R]_j)\right).\]
\end{itemize}
\end{thm}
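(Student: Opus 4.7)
The plan is to split into cases according to whether the aisle $\cat[U]$ contains an indecomposable Ext-projective.  \emph{Case (a).}  By Lemma~\ref{l:6.1} the indecomposable Ext-projectives of $\cat[U]$ form a section $\Sigma$ of cardinality $|Q_0|$ lying in some transjective component $\cat_i$.  I would first show $\Succ\Sigma\sset\cat[U]$: a successor $X$ of some $e\in\Sigma$ lies in $\cat[U]$ or in $\cat[U]^\perp$ by splitness, and Lemma~\ref{l:4.2} rules out the latter since a semipath $e\to\cdots\to X$ would run from $\cat[U]$ into $\cat[U]^\perp$.  Similarly every indecomposable of $\cat[R]_i$ receives a nonzero morphism from some sufficiently far-right point of $\Succ\Sigma$, hence lies in $\cat[U]$.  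Since $\Sigma[1]\sset\cat_{i+1}\cap\cat[U]$, iterating the argument and invoking the shifted form of Lemma~\ref{l:3.1} puts $\cat_j\vee\cat[R]_j$ inside $\cat[U]$ for every $j>i$; dually, indecomposables of $\cat_i$ lying to the left of $\Sigma$, as well as the components $\cat_j$ and $\cat[R]_j$ with $j<i$, are swept into $\cat[U]^\perp$ by their semipaths into $\tau\Sigma\sset\cat[U]^\perp$.  This amounts to Corollary~\ref{c:3.2} after a shift by $[1-i]$.

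\emph{Cases (b) and (c).}  Assume $\cat[U]$ has no indecomposable Ext-projective.  Then each $X\in\cat[U]$ satisfies $\tau X\notin\cat[U]^\perp$, so splitness forces $\tau X\in\cat[U]$ and $\cat[U]$ is closed under $\tau$; dually, an Ext-injective in $\cat[U]^\perp$ would produce an Ext-projective in $\cat[U]$, so $\cat[U]^\perp$ is closed under $\tau^{-1}$.  The first step is to show that each transjective component lies entirely in $\cat[U]$ or entirely in $\cat[U]^\perp$: given $Y\in\cat_j\cap\cat[U]^\perp$ and arbitrary $X\in\cat_j$, closure gives $\tau^{-s}Y\in\cat[U]^\perp$ for every $s\ge 0$, and the shape of $\cat_j\cong\ZZ Q$ yields a path $X\to\cdots\to\tau^{-s}Y$ for $s$ large enough, so Lemma~\ref{l:4.2} combined with splitness gives $X\in\cat[U]^\perp$.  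Setting $i=\sup\{j\mid\cat_j\sset\cat[U]^\perp\}$, we obtain $\cat_j\sset\cat[U]^\perp$ for $j\le i$ and $\cat_j\sset\cat[U]$ for $j>i$.

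For the regular layers, any $Z\in\cat[R]_j$ with $j<i$ has a morphism to some element of $\cat_{j+1}\sset\cat[U]^\perp$, so $Z\in\cat[U]^\perp$ by the predecessor-closure of $\cat[U]^\perp$, while the shifted Lemma~\ref{l:3.1} applied to $\cat_{i+1}\sset\cat[U]$ yields $\cat[R]_j\sset\cat[U]$ for $j>i$.  It remains to settle $\cat[R]_i$.  Each individual regular component $\Gamma\sset\cat[R]_i$ lies entirely in one side: in the tame case $\Gamma$ is a stable tube, and $\tau$-periodicity together with the semipath-closures of $\cat[U]$ and $\cat[U]^\perp$ propagates any single membership throughout the tube.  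Since distinct tubes of a tame hereditary algebra are Hom/Ext-orthogonal, the subset $L\sset\dd P_1(\kk)$ of tubes lying in $\cat[U]$ can be chosen arbitrarily, producing (b).

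In the wild case $\Gamma\cong\ZZ\AA_\infty$.  For $E\in\Gamma\cap\cat[U]$ one obtains $\tau^sE\in\cat[U]$ for all $s\in\ZZ$ by combining $\tau$-closure with Kerner's morphism $E\to\tau^tE$ (which, applied backwards, drags the orbit of $E$ in both directions into $\cat[U]$), and then every vertex of $\Gamma$ is a successor of $\tau^sE$ for some $s\gg 0$, hence lies in $\cat[U]$; so each wild component is all-or-nothing as well.  To prove that in the wild case all regular components of $\cat[R]_i$ must lie on the same side, I would invoke a Kerner-type non-vanishing: for any quasi-simples $M$, $N$ in distinct regular components of a wild hereditary algebra there exists $k\ge 0$ with $\Hom_H(M,\tau^{-k}N)\ne 0$.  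Combined with the $\tau$-closures and Lemma~\ref{l:4.2} this forbids having one regular component of $\cat[R]_i$ in $\cat[U]$ and another in $\cat[U]^\perp$, and yields the dichotomy (c).  The main obstacle is precisely this last step: extracting the correct wild-only Hom-non-vanishing between distinct regular components of a wild hereditary algebra is where the wild/tame dichotomy genuinely enters the proof.
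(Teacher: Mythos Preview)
Your plan matches the paper's proof closely: Lemma~\ref{l:6.1} for case~(a), then for (b) and (c) the all-or-nothing behaviour of components combined with tube orthogonality (tame) or Kerner's Hom-nonvanishing (wild). Two small remarks. First, you do not treat the representation-finite case; the paper begins by observing that over a representation-finite $\kk Q$ any nontriangulated split aisle contains some $X$ with $X[-1]\notin\cat[U]$, and then an Ext-projective is found on the $\tau$-orbit of $X$, so this case is absorbed into~(a). Second, the form of Kerner's result you invoke, $\Hom_H(M,\tau^{-k}N)\ne 0$ for some $k\ge 0$, is not the one in~\cite{K}(1.3); the paper uses $\Hom_H(X,\tau^{t}Y)\ne 0$ for some $t>0$, valid for \emph{arbitrary} regular indecomposables $X,Y$, and from this deduces in one stroke that a single $X\in\cat[R]_i\cap\cat[U]$ forces all of $\cat[R]_i$ into $\cat[U]$ (since $\tau^tY\in\cat[U]$ and then $Y$ is a successor of $\tau^tY$). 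So there is no need to split the wild regular argument into intra- and inter-component steps, and the ``obstacle'' you flag is exactly the cited result.
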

\begin{proof}
   Assume first that $\kk Q$ is representation-finite.
   In this case, either $\cat[U]$ is triangulated or else there exists an indecomposable object $X\in\cat[U]$ such that $X[-1]\notin\cat[U]$.
   Hence there exists an indecomposable object $E_0$ in the $\tau$-orbit of $X$ such that $E_0\in\cat[U]$ but $\tau E_0\notin\cat[U]$.
   Because $\torsion U{U^\perp[1]}$ is split, $\tau E_0\in\cat[U]^\perp$ and $E_0$ is Ext-projective. Lemma~\ref{l:6.1} then gives a section $\Sigma$ in $\Gamma(\der^b(\mkQ))$ consisting of Ext-projectives. It is then easily seen that $\cat[U]=\Succ\Sigma$.

   Thus, assume that $\kk Q$ is representation-infinite.

   Assume first that $\kk Q$ is wild.
   In this case, the transjective components $\cat_i$ are of the form $\ZZ Q$, while the regular families $\cat[R]_i$ consist each of infinitely many components of type $\ZZ\AA_\infty$.

   In case $\cat[U]$ admits an indecomposable Ext-projective, then, because of lemma~\ref{l:6.1}, this Ext-projective lies in some $\cat_i$, there is a section in $\cat_i$ consisting of Ext-projectives and we conclude as in the representation-finite case.
   We may thus assume that $\cat[U]$ has no Ext-projectives.

   Let $X$, $Y$ be any two indecomposable regular $\kk Q$-modules.
   Because of~\cite{K}(1.3), there exists $t>0$ such that $\Hom_H(X,\tau^tY)\ne 0$.
   Thus, if $X\in\cat[U]$ then so does $\tau^tY$ and hence so does $Y$.
   Dually, if $Y\in\cat[U]^\perp$, then $X\in\cat[U]^\perp$.
   This proves that either all regular components in a given $\cat[R]_i$ lie in $\cat[U]$, or they all lie in $\cat[U]^\perp$.
   Thus we have one of the following cases:
      either there exists $i\in\ZZ$ such that $\cat[R]_i\sset\cat[U]=0$ and $\cat_{i+1}\sset\cat[U]$ and then $\cat[U]=\bigvee_{j>i}(\cat_j\vee\cat[R]_j)$,
     or else there exists $i\in\ZZ$ such that $\cat_{i-1}\cap\cat[U]=0$ and $\cat[R]_i\sset\cat[U]$,
      in which case we have $\cat[U]=\cat[R]_i\vee\left(\bigvee_{j>i}(\cat_j\vee\cat[R]_j)\right)$.

   Finally, assume that $\kk Q$ is tame. Again the $\cat_i$ are of the form $\ZZ Q$ while each regular family $\cat[R]_i$ consists of a separating family of pairwise orthogonal stable tubes indexed by the projective line $\dd P_1(\kk)$.
   If $\cat[U]$ admits an indecomposable Ext-projective, then we proceed as in the wild case above.
   If not, then there are two cases.
   If there exists $i\in\ZZ$ with $\cat_i\cap\cat[U]=0$ and $\cat[R]_i\cap\cat[U]\ne 0$, let $\cat[T]_\lambda$ be a tube in $\cat[R]_i$ such that $\cat[T]_\lambda\cap\cat[U]\ne 0$, then $\cat[T]_\lambda\sset\cat[U]$.
   If, on the other hand, $\cat[T]_\mu\cap\cat[U]=0$, then $\cat[T]_\mu\sset\cat[U]^\perp$.
   The pairwise orthogonality of the tubes implies the existence of a subset $L\sset\dd P_1(\kk)$ such that
   \[ \cat[U]=\left(\bigvee_{\lambda\in L}\cat[T]_\lambda\right)\vee\left(\bigvee_{j>i}(\cat_j\vee\cat[R]_j)\right).\]
   If, on the other hand, there exists $i\in\ZZ$ such that $\cat[R]_i\cap\cat[U]=0$ and $\cat_{i+1}\cap\cat[U]\ne 0$,
      then we proceed as before taking $L=\varnothing$ and we get $\cat[U]=\bigvee_{j>i}(\cat_j\vee\cat[R]_j)$.
\end{proof}

For the notion of tilting complex, we refer the reader to~\cite{Ri}.

\begin{corollary}
   Let $\torsion U{U^\perp[1]}$ be a split $t$-structure in $\der^b(\mkQ)$ and $E_1,\dotsc,E_n$ be a complete set of representative of the isomorphism classes of indecomposable Ext-projectives in $\cat[U]$.
   Let $E=\bigoplus_{i=1}^n E_i$.
   Then
   \begin{itemize}
      \item [(a)] $E$ belongs to the heart, so $\cat[U]$ is not triangulated.
      
      \item [(b)] $E$ is a tilting complex in $\der^b(\mkQ)$ and $\cat[U]$ is the smallest suspended subcategory of $\der^b(\mkQ)$ containing $E$.
   \end{itemize}
\end{corollary}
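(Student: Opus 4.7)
Plan: By the preceding theorem, the hypothesis that $\cat[U]$ admits an indecomposable Ext-projective places us in its case (a): $\Sigma = \{E_1,\dots,E_n\}$ forms a section in some transjective component $\cat_i$, and
\[\cat[U] = (\Succ\Sigma)\vee\cat[R]_i\vee\bigvee_{j>i}(\cat_j\vee\cat[R]_j).\]
Because the $t$-structure is split, every indecomposable not in $\cat[U]$ lies in $\cat[U]^\perp$; in particular $\cat_{i-1}\sset\cat[U]^\perp$. Each $E_k\in\cat_i$ then satisfies $E_k[-1]\in\cat_{i-1}\sset\cat[U]^\perp$, hence $E_k\in\cat[U]^\perp[1]$; combined with $E_k\in\cat[U]$ this puts $E$ in the heart. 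Since the heart is nonzero, lemma~\ref{l:4.1} yields that $\cat[U]$ is not triangulated, proving (a).

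For (b), I would first verify that $\Hom_{\der}(E,E[k])=0$ for every $k\neq 0$. For $k\geq 1$: since $\cat[U]$ is an aisle (closed under positive shifts), $E[k-1]\in\cat[U]$, and so
\[\Hom_{\der}(E,E[k])=\Hom_{\der}(E,(E[k-1])[1])=0\]
by Ext-projectivity of the summands of $E$. For $k\leq -1$: $E[k]\in\cat_{i+k}\sset\cat[U]^\perp$ while $E\in\cat[U]$, so the Hom vanishes again.

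It remains to prove that $\cat[U]$ is the smallest suspended subcategory containing $E$; call the latter $\cat[U]'$. The inclusion $\cat[U]'\sset\cat[U]$ is immediate since $\cat[U]$ is suspended and contains $E$. For the reverse, the AR-meshes inside $\cat_i\cong\ZZ Q$ allow one to walk from $\Sigma$ outward to every object of $\Succ\Sigma$ by iteratively forming cones of morphisms between previously-constructed objects of $\cat[U]'$; the short exact sequences linking the transjective and regular parts of $\mod\kk Q$ then introduce $\cat[R]_i$ into $\cat[U]'$; and taking positive shifts supplies $\cat_j\vee\cat[R]_j$ for all $j>i$. Combined with the Ext-vanishing just proved and with $|\Sigma|=|Q_0|$ from corollary~\ref{c:6.2}, Rickard's theorem then identifies $E$ as a tilting complex. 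The main obstacle is this generation step: making precise, mesh by mesh, how $\Sigma$ generates $\Succ\Sigma$ under cones and positive shifts, and how the regular objects of $\cat[R]_i$ appear as cones between transjective objects inside $\cat[U]'$; every other ingredient reduces to bookkeeping on the classification theorem and on the definition of Ext-projective.
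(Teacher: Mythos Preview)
Your argument for (a) is correct but takes a detour through the classification theorem. The paper's proof is more direct and independent of that theorem: if $E_i[-1]\in\cat[U]$, then Ext-projectivity of $E_i$ gives $\Hom(E_i,E_i)=\Hom(E_i,(E_i[-1])[1])=0$, which is absurd; hence $E_i[-1]\notin\cat[U]$, and splitness forces $E_i[-1]\in\cat[U]^\perp$. This avoids any appeal to the structure of $\cat[U]$ or to the location of $E_i$ in a transjective component.

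For (b) the two approaches diverge more substantially. The paper does not verify self-orthogonality or generation by hand at all: it invokes \cite{ASST}, corollary~4.4, which states that an aisle with exactly $\rk K_0(\kk Q)$ indecomposable Ext-projectives is generated (as a suspended subcategory) by their direct sum, and that this sum is a tilting complex. Combined with corollary~\ref{c:6.2} giving $n=|Q_0|$, both assertions of (b) follow immediately. Your route---checking $\Hom(E,E[k])=0$ directly and then building $\Succ\Sigma$ and $\cat[R]_i$ out of $\Sigma$ via Auslander--Reiten meshes and extensions---is in principle viable and has the merit of being self-contained, but the generation step you flag as the ``main obstacle'' really is one: producing every regular object as a cone of morphisms between objects already in $\cat[U]'$ requires a nontrivial inductive argument (essentially reproving the cited result from \cite{ASST}), and your sketch does not yet supply it. The paper's choice to outsource this to a prior paper is what makes its proof of (b) a two-line affair.
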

\begin{proof}
   \begin{itemize}
      \item [(a)] We claim that, for any $i$, $E_i[-1]\notin\cat[U]$.
      Indeed, if this were the case and $E_i[-1]\in\cat[U]$, then we get $\Hom_{\der^b(\mkQ)}(E_i,E_i)=\Hom_{\der^b(\mkQ)}(E_i,E_i[-1][1])=0$ because $E_i$ is Ext-projective in $\cat[U]$, and this is an absurdity.
      This shows our claim.
      Because $\torsion U{U^\perp[1]}$ is split, $E_i[-1]\in\cat[U]^\perp$ and so $E_i\in\cat[U]^\perp[1]$.
      Because $E_i\in\cat[U]$, we indeed get $E_i\in\cat[U]\cap\cat[U]^\perp[1]$.
      Finally, $E$ lies in the heart, because each $E_i$ does.
      The last statement follows from lemma~\ref{l:4.1}.
      
      \item [(b)] Because of corollary~\ref{c:6.2}, we have $n=|Q_0|$.
      Applying~\cite{AST}~corollary~4.4, we get that $E$ is a generator of $\der^b(\mkQ)$.
      Hence it is a tilting complex.
      The second statement also follows from~\cite{AST}~corollary~4.4.\qedhere
   \end{itemize}
\end{proof}

\textsc{Acknowledgements}.  The first author gratefully acknowledges partial support from the NSERC of Canada, the FRQ-NT of Qu\'ebec and the Universit\'e de Sherbrooke.
The third author is a researcher of the CONICET (Argentina).

%

\begin{thebibliography}{Dillo 83}

\bibitem[ABS]{ABS}   Assem, Ibrahim and Br{\"u}stle, Thomas and Schiffler, Ralf,
  Cluster-tilted algebras and slices,
  {J. Algebra}, {319 (8)}, {3464--3479}, 2008.
 \bibitem[AK]{AK}{Assem, Ibrahim and Kerner, Otto},
 {Constructing torsion pairs},
{J. Algebra},
  {185},	{19--41}, {1996}.

   \bibitem[ARS]{ARS}
   {Auslander, Maurice and Reiten, Idun and Smal{\o}, Sverre O.}, {Representation
theory of artin algebras.} vol 36 of Cambridge Studies in Advanced
Mathematics, Cambridge University Press, (1995).
  \bibitem[AS]{AS}
 {Auslander, Maurice and Smal{\o}, Sverre O.},
   {Almost split sequences in subcategories},
    {J. Algebra},
    {69}
  (2),
   {426--454},   {1981}.
   {Addendum: \textit{J. Algebra}, 71(2):592--594, 1981.}

\bibitem[ASS]{ASS} {Assem, Ibrahim and Simson, Daniel and Skowro{\'n}ski, Andrzej},
{Elements of
the representation theory of associative algebras}. London Math.
Soc. Student Texts {65}. Cambridge University Press,
(2006).
   \bibitem[AST]{ASST}
   {Assem, Ibrahim and Souto Salorio, Mar{\'{\i}}a Jos{\'e} and
   Trepode, Sonia},
 {Ext-projectives in suspended subcategories},
  {J. Pure Appl. Algebra},
{212},	{423--434},    {2008}.
  \bibitem[BBD]{BBD}
{Be{\u\i}linson, Alexander A. and Bernstein, Joseph and Deligne, Pierre},
 {Faisceaux pervers},
    {Ast\'erisque},
   {100},
 {5--171},
  {Soc. Math. France, Paris},
  {1982}.
    \bibitem[BR]{BR}
  {Berg, Carl Fredrik and van Roosmalen, Adam-Christiaan},
 {Hereditary categories with {S}erre duality which are generated
   by preprojectives},
  {J. Algebra},
  {335}, {220--257},
 {2011}.
     \bibitem[D]{D}
  {Dickson, Spencer E.},
  {A torsion theory for {A}belian categories},
  {Trans. Amer. Math. Soc.},
    {121},  {223--235},
  {1966}.
       \bibitem[H1]{H1}
       {Happel, Dieter},
   {Triangulated categories in the representation theory of finite-dimensional algebras},
{Cambridge University Press, Cambridge},
  {London Mathematical Society Lecture Note Series}, {119},
   {1988}.
       \bibitem[H2]{H2}
       {Happel, Dieter},
       {A characterization of hereditary categories with tilting
   object},
 {Invent. Math.},
{144 (2)}, {381--398},
  {2001}.
       \bibitem[HR]{HR}
 {Happel, Dieter and Reiten, Idun},
 {Hereditary categories with tilting object},
 {Math. Z.},
   {232(3)}, {559--588},
   {1999}.
        \bibitem[HRS1]{HRS1}
  {Happel, Dieter and Reiten, Idun and Smal{\o}, Sverre O.},
{Tilting in abelian categories and quasitilted algebras},
{Mem. Amer. Math. Soc.},
 120(575), {viii+ 88},
  {1996}.
          \bibitem[HRS2]{HRS2}
           {Happel, Dieter and Reiten, Idun and Smal{\o}, Sverre O.},
{Piecewise hereditary algebras},
  {Arch. Math. (Basel)},
 {66 (3)}, {182--186},
  {1996}.

      \bibitem[K]{K}
      {Kerner, Otto},
  {Tilting wild algebras},
  {J. London Math. Soc. (2)},
  {39 (1)},  {29--47},
    {1989}.
         \bibitem[KV]{KV}
         {Keller, Bernhard and Vossieck, Dieter},
  {Aisles in derived categories},
{Bull. Soc. Math. Belg. S\'er. A},
 {40 (2)}, {239--253},
  {1988}.
            \bibitem[Ri]{Ri}
            {Rickard, Jeremy},
   {Morita theory for derived categories},
  {J. London Math. Soc. (2)},
  {39 (3)}, {436--456},
 {1989}.
     \bibitem[R1]{R1}
    {Ringel, Claus Michael},
   {Tame algebras and integral quadratic forms},
 {Lecture Notes in Mathematics},  {1099}, {Springer-Verlag},   {Berlin},
  {1984}.
  \bibitem[R2]{R2}
    {Ringel, Claus Michael},
  {Triangulated hereditary categories},
   {Compositio Mathematica},
     {(to appear)}.
  \end{thebibliography}
\end{document}